\newtheorem{thm}{Theorem}[section]
\newtheorem{cor}[thm]{Corollary}
\newtheorem{lemma}[thm]{Lemma}
\newtheorem{prop}[thm]{Proposition}
\theoremstyle{definition}
\newtheorem{remark}[thm]{Remark}
\numberwithin{equation}{section}
\numberwithin{figure}{section}
\def\al{\alpha}
\def\ga{\gamma}
\def\de{\delta}
\def\la{\lambda}
\def\Ga{\Gamma}
\def\C{\mathbb{C}}
\def\N{\mathbb{N}}
\def\SU{\mathrm{SU}}
\def\Id{\mathbf{1}}
\def\cD{\mathcal{D}}
\def\cE{\mathcal{E}}
\def\pol{\mathrm{pol}}
\newcommand{\rFs}[5]{\,_{#1}F_{#2} \left( \genfrac{.}{.}{0pt}{}{#3}{#4}\ ;#5 \right)}
\let\wh\widehat
\let\wt\widetilde
\renewcommand{\d}{{\mathrm d}}
\title{An evolution of matrix-valued orthogonal polynomials}
\author{Erik Koelink}
\address{IMAPP, Radboud Universiteit, Nijmegen, The Netherlands}
\email{e.koelink@math.ru.nl}
\author{Pablo Rom\'an}
\address{FaMAF-CIEM, Universidad Nacional de C\'ordoba, Argentina}
\email{pablo.roman@unc.edu.ar}
\author{Wadim Zudilin}
\address{IMAPP, Radboud Universiteit, Nijmegen, The Netherlands}
\email{w.zudilin@math.ru.nl}
\date{\today}
\begin{document}

\begin{abstract}
We establish new explicit connections between classical (scalar) and matrix Gegenbauer polynomials, which result in new symmetries of the latter and further give access to several properties that have been out of reach before: generating functions, distribution of zeros for individual entries of the matrices and new type of differential-difference structure.
We further speculate about other potentials of the connection formulas found.
Part of our proofs makes use of creative telescoping in a matrix setting\,---\,the strategy which is not yet developed algorithmically.
\end{abstract}

\keywords{Experimental mathematics; orthogonal polynomials; matrix-valued polynomials; differential-difference operators; zeros of polynomials; creative telescoping}

\subjclass[2020]{33C45, 33C47, 33E30, 33F10}

\maketitle


\section{Introduction}\label{sec:intro}

When it comes to a topic as classical as orthogonal polynomials, one can hardly expect spectacular novelties. But they do happen: for example, in situations when a \emph{natural} generalisation is found.
Decades ago, in the middle of the 20th century, M.G.~Krein introduced matrix-valued orthogonal polynomials, in his study of higher order differential operators and of the corresponding moment problem.
This new topic progressed at a slow pace and mainly focused on analogues of classical results for scalar orthogonal polynomials;
the overview in \cite{DamaPS} gives an introduction and extensive literature up to 2008.
Despite of the theoretical development, not so many concrete examples of matrix-valued orthogonal polynomials have been encountered. Those that have been found in the last decades using insights from representation theory demonstrate a rich structure, not always observable for their scalar originals.
At the same time the polynomials are not easily accessible from a computational perspective; this makes it hard to draw further connections to other mathematics areas, for example, to number theory and analysis of special functions.
In this paper we aim at changing this perception and demonstrating that naturally defined matrix-valued orthogonal polynomials are much closer to their scalar prototypes than expected.
It is this closeness and its numerous consequences that we refer to as an \emph{evolution} in the title.
We are mainly concerned with matrix-valued analogues of the Gegenbauer polynomials
\[
C_n^{(\nu)}(x)
=\frac{(2\nu)_n}{n!}\sum_{k=0}^n\frac{(-n)_k(2\nu+n)_k}{k!\,(\nu+\frac12)_k}\bigg(\frac{1-x}2\bigg)^k
=\sum_{k=0}^{\lfloor n/2\rfloor}(-1)^k\frac{\Gamma(\nu+n-k)}{k!\,(n-2k)!\,\Gamma(\nu)}(2x)^{n-2k},
\]
also known as ultraspherical polynomials, where $(a)_k=\Gamma(a+k)/\Gamma(a)$ denotes the Pochhammer symbol.
These matrix-valued polynomials $P_n^{(\nu)}(x)$ were introduced in \cite{KoeldlRR}. For a special value of $\nu=1$, they are matrix-valued analogues of the Chebyshev polynomials of the second kind and they had been previously defined using matrix-valued spherical functions, see \cite{KoelvPR-IMRN}, \cite{KoelvPR-RIMS}, building on earlier work of Koornwinder \cite{Koor-SIAM85}. For the $2\times 2$ case,
a more general set of matrix-valued Gegenbauer polynomials is introduced by Pacharoni and Zurri\'an \cite{PachZ}, and the entries are directly given as sum of two scalar Gegenbauer polynomials. The connection between these sets of $2\times 2$-matrix-valued Gegenbauer polynomials is discussed in \cite[Rmk.~3.8]{KoeldlRR}. In this paper we give an extension of the expansion in scalar Gegenbauer polynomials for arbitrary size, see Theorem~\ref{thm:expansionFnumr}.

A connection between the scalar and matrix-valued Gegenbauer polynomials is somewhat intimate: the former appear in description of the matrix-valued weight for the latter, see \cite{KoeldlRR} and Section~\ref{sec:gege} below.
At the same time, the scalar Gegenbauer polynomials can be naturally promoted to matrix-valued orthogonal polynomials and their span connected with the span of the matrix-valued Gegenbauer polynomials.
Such connections between different sets of matrix-valued orthogonal polynomials have been never recorded before; in Section~\ref{sec:expansionMV-scalarGegenbauer} we give two 
explicit connection formulas for the scalar and matrix-valued Gegenbauer polynomials. 
With their help we can further explore the structure of related difference and differential equations. In particular, we construct in Section~\ref{sec:differenceeqtFknnu} new type of differential equations satisfied by the matrix-valued orthogonal polynomials.
These equations utilize the noncommutative structure of the matrix-valued orthogonal polynomials and, therefore, degenerate in the scalar setting; this is perhaps a reason for their non-appearance in the literature.

The expressions in Section~\ref{sec:expansionMV-scalarGegenbauer} lead to a fairly simple computational access to the matrix-valued polynomials.
For example, they allow us to discuss generating functions of the matrix-valued Gegenbauer polynomials using known generating functions of the scalar ones; this is done in Section~\ref{sec:generatingfMVGegenbauerpols}.
The explicit formulas and numerical check suggest further that the zeros of entries of the matrix-valued polynomials follow distinguished patterns\,---\,those serve as a generalisation of the property of scalar orthogonal polynomials to have all zeros located in the convex hull of the orthogonality measure  on the real line.
We speculate about these observations in Section~\ref{sec:zerosentriesMVGegenbauerpols}, also towards other known matrix-valued orthogonal polynomials.
Finally, in Section~\ref{sec:discussion} we highlight some further potential applications of the connection formulas from Section~\ref{sec:expansionMV-scalarGegenbauer}.

The lineup of our exposition is somewhat misleading for how our results were actually found.
We first looked for distribution of the zeros of entries of Gegenbauer polynomials of `reasonable' matrix size utilising their expressions from \cite{KoeldlRR} as triple hypergeometric sums.
We observed that, for certain entries, the zeros were real and interlacing with those of the corresponding polynomial entry from the previous-indexed Gegenbauer.
Such considerations helped us to suspect an explicit connection with scalar orthogonal polynomials and to realize that simpler expressions exist; making use of basic principles of Experimental Mathematics \cite{ExpMath} we managed to recognise new symmetries of the matrix-valued polynomials and figure out what now comes as Theorem~\ref{thm:expansionFnumr}.
In order to prove the corresponding formulas we needed to invent a matrix generalisation of the famous Wilf--Zeilberger algorithm of creative telescoping \cite{WZ92,Ze91}; as no implementation of such exists at the moment of writing, the related linear algebra calculations were manually performed.
Theorem~\ref{thm:expansionFnumr} suggests that the formulas can be inverted, to express scalar Gegenbauer polynomials in terms of matrix-valued ones;
after another round of experimentation we ended up with what is now Theorem~\ref{thm:expansionGnumr}.
Its proof is more in `classical spirits'\,---\,in contrast with the other proof, we could not find a creative-telescoping argument.
Our analysis of the zeros of entries of Gegenbauer polynomials played an important role in the execution and further development of this project. Though we only possess a limited explanation of the structure of these zeros, we feel a need for sharing our observations with the reader, so we display them in a condensed form in Section~\ref{sec:zerosentriesMVGegenbauerpols}.

It seems to be worthy of pointing out in this introduction that a potential use of creative telescoping in the matrix- or vector-valued (non-commutative!) setup may give access to interesting hypergeometric identities, not necessarily linked to matrix-valued orthogonal polynomials. We hope that related algorithms will be designed and made efficient in the near future, with several nice applications that we cannot foresee at the moment.

\smallskip\noindent
\textbf{Acknowledgements.}
It is our pleasure to thank Tom Koornwinder, Arno Kuijlaars, Andrei Mart\'\i nez-Finkelshtein and Maarten van Pruijssen for helpful discussions of several aspects of the work.
Furthermore, we thank Andrei Mart\'\i nez-Finkelshtein for supplying us with the argument we use in the paragraph preceding Remark~\ref{rem:shift-k-real}.
We also thank Antonio Dur\'an for a conversation about zero locations of entries of matrix-valued orthogonal polynomials and the anonymous referee for pointing out numerous stylistic inconsistencies in the earlier version.
This work would be hardly possible without live interactions between the authors, which took occasion at several events on orthogonal polynomials (in Nijmegen, Leuven, C\'ordoba) and during extended visits of the first author to Argentina and the second author to the Netherlands:
Erik Koelink thanks Universidad Nacional C\'ordoba for its hospitality and
Pablo Rom\'an thanks the Radboud University for its hospitality.
Part of the work of the third author was done during his visit in the Max-Planck Institute of Mathematics (in Bonn);
Wadim Zudilin thanks the institute for excellent working conditions provided.
This work was supported in part from the NWO grant OCENW.KLEIN.006.

\section{Gegenbauer polynomials}\label{sec:gege}

We start this section with an overview of classical Gegenbauer polynomials and then review known facts from \cite{KoeldlRR} about their matrix-valued mates.

The scalar Gegenbauer polynomials form a subfamily of the Jacobi polynomials. 
Their definition above can be put in the hypergeometric form
\begin{equation}\label{eq:defGegenbauerpol}
C_n^{(\nu)}(x)
= \frac{(2\nu)_n}{n!} \rFs{2}{1}{-n, \, 2\nu+n}{\nu+\frac12}{\frac{1-x}{2}},
\end{equation}
where we conventionally follow the standard notation, see \cite{AndrAR,Aske,Isma,KoekLS}. The Gegenbauer polynomials satisfy the connection formula 
\begin{equation}\label{eq:connectionGegenbauerpol}
C_m^{(\nu)}(x)
=\sum_{s=0}^{\lfloor m/2\rfloor}
\frac{(\la+m-2s)\,(\nu)_{m-s}}{(\la)_{m-s+1}}
\frac{(\nu-\la)_s}{s!}C_{m-2s}^{(\la)}(x).
\end{equation}
In the particular case $\la=\nu+N$,  $N\in \N$, the sum has a natural upper bound:
\begin{equation}\label{eq:connectionGegenbauerpol-integer}
C_m^{(\nu)}(x)
=\sum_{s=0}^{\lfloor m/2\rfloor \wedge N} 
\,\frac{(\nu+N+m-2s)\,(\nu)_{m-s}}{(\nu+N)_{m-s+1}}
\,\frac{(-N)_s}{s!}C_{m-2s}^{(\nu+N)}(x),
\end{equation}
where $\lfloor m/2\rfloor \wedge N$ denotes the minimum of 
$\lfloor m/2\rfloor$ and $N$. 
The linearisation formula for the Gegenbauer polynomials reads
\begin{equation}\label{eq:Gegenbauer-linearisation}
C^{(\nu)}_k(x)C^{(\nu)}_l(x) = \sum_{p=0}^{k \wedge l} 
\frac{k+l+\nu-2p}{k+l+\nu-p}
\,\frac{(\nu)_p (\nu)_{k-p} (\nu)_{l-p} (2\nu)_{k+l-p}}{p!\, (k-p)!\, (l-p)!\, (\nu)_{k+l-p}} \frac{(k+l-2p)!}{(2\nu)_{k+l-2p}}
C^{(\nu)}_{k+l-2p}(x).
\end{equation}
The orthogonality relations 
\begin{equation}\label{eq:Gegenbauer-orthorelations}
\int_{-1}^1 C^{(\nu)}_k(x) C^{(\nu)}_n(x)\, (1-x^2)^{\nu-\frac12}\, dx =
\de_{k,n} \frac{\pi\, 2^{1-2\nu }\, \Ga(n+2\nu)}{\Ga(\nu)^2\, (n+\nu)\, n!}
\end{equation}
hold for $\nu>-\frac12$, with a slightly different normalisation required when $\nu=0$.
Since the Gegenbauer polynomials are orthogonal, they satisfy a three-term recurrence relation; it is
\begin{equation}\label{eq:Gegenbauer-3TRR}
2(n+\nu)\, x\, C_n^{(\nu)}(x) = 
(n+1)\, C_{n+1}^{(\nu)}(x) + (n+2\nu-1)\,  C_{n-1}^{(\nu)}(x),
\end{equation}
which determines the polynomials from the initial conditions
$C_{-1}^{(\nu)}(x)=0$, $C_{0}^{(\nu)}(x)=1$.
We also use the Gegenbauer polynomials for 
negative $\nu$, in which case we follow the convention in~\cite{CaglK}. 
We refer to \cite{AndrAR,Aske,Isma,KoekLS} for 
these results and for more information on the Gegenbauer polynomials; see, 
in particular, Askey \cite{Aske} for the history and  importance of the 
connection and linearisation formulae \eqref{eq:connectionGegenbauerpol}, \eqref{eq:Gegenbauer-linearisation}. 

\medskip
We now review the matrix-valued Gegenbauer polynomials.
For $\ell\in\frac12\N$ and $\nu>0$, following \cite[Def.~2.1]{KoeldlRR} their matrix-valued weight is the $(2\ell+1)\times (2\ell+1)$-matrix-valued function $W(x)=W^{(\nu)}(x)$ given by 
\begin{gather}
\label{eq:defweight}
\big(W(x)\big)_{i,j}
= (1-x^2)^{\nu-\frac12} \sum_{k= 0\vee i+j-2\ell}^{i\wedge j} \al_k(i,j) C^{(\nu)}_{i+j-2k}(x),
\\
\begin{aligned}
\al_k(i,j) &= (-1)^k
\frac{i!\, j!\, (i+j-2k)!}{k!\, (2\nu)_{i+j-2k}\, (\nu)_{i+j-k}}
\,\frac{(\nu)_{i-k}\, (\nu)_{j-k}}{(i-k)!\, (j-k)!}
\,\frac{i+j-2k+\nu}{i+j-k+\nu}
\\ &\qquad\times
\frac{(2\ell-i)!\, (2\ell-j)!}{(2\ell+k-i-j)!} \, (-2\ell-\nu)_k 
\,\frac{(2\ell+\nu)}{(2\ell)!},
\end{aligned}
\nonumber
\end{gather}
where $i,j\in\{0,1,\dots, 2\ell\}$ and the notation $a\vee b$, $a\wedge b$ stands for $\min\{a,b\}$ and $\max\{a,b\}$, respectively.
We slightly alter the expression for $\al_k$ from \cite{KoeldlRR} to make $W^{(\nu)}$ transparently symmetric.
Finally, put $W^{(\nu)}(x) = (1-x^2)^{\nu-\frac12}\, W^{(\nu)}_{\pol}(x)$ (correcting a typo in \cite[p.~463]{KoeldlRR}, where a superfluous $(1-x^2)^{\nu-\frac12}$ appears in the first line). It turns out that $W^{(\nu)}(x)$ is positive definite; this follows from the LDU-decomposition 
\cite[Thm.~2.1]{KoeldlRR}, which states 
\begin{equation}\label{eq:LDUweight}
W^{(\nu)}(x)=
L^{(\nu)}(x)T^{(\nu)}(x)L^{(\nu)}(x)^{t}, \quad x\in(-1,1),
\end{equation} 
where $L^{(\nu)}\colon [-1,1]\to M_{2\ell+1}(\C)$ is the unipotent lower triangular matrix-valued polynomial,
\[
\bigl(L^{(\nu)}(x)\bigr)_{m,k}=\begin{cases} 0 & \text{if } m<k \\
\displaystyle{\frac{m!}{k! (2\nu+2k)_{m-k}} C^{(\nu+k)}_{m-k}(x)} & \text{if } m\geq k,
\end{cases}
\]
and $T^{(\nu)}\colon (-1,1)\to M_{2\ell+1}(\C)$ is the diagonal matrix-valued function,
\begin{equation*}
\begin{split}
\bigl(T^{(\nu)}(x)\bigr)_{k,k} = t^{(\nu)}_{k}\,  (1-x^2)^{k+\nu-1/2}, \quad 
 t^{(\nu)}_{k} = \frac{k!\, (\nu)_k}{(\nu+1/2)_k}
\frac{(2\nu + 2\ell )_{k}\, (2\ell+\nu)}{(2\ell - k+1)_k\, ( 2\nu + k - 1)_k}.
\end{split}
\end{equation*}
Let $J$ be the matrix with $1$s along the antidiagonal, $J_{i,j}=\de_{i+j,2\ell}$. Then $JW^{(\nu)}(x)=W^{(\nu)}(x)J$ for all $x\in (-1,1)$ by \cite[Prop.~2.6]{KoeldlRR}.

The monic matrix-valued Gegenbauer polynomials 
$P_n^{(\nu)}(x)$, $n\in \N$, are orthogonal 
with respect to the matrix-valued measure~\eqref{eq:defweight}, i.e. 
\begin{gather}\label{eq:defmonicMVorthopol}
\int_{-1}^1 P_n^{(\nu)}(x) \, W^{(\nu)}(x) \bigl( P_m^{(\nu)}(x)\bigr)^\ast \, dx = \de_{n,m} H_n^{(\nu)},
\\
P_n^{(\nu)}(x) = x^n P^{(\nu)}_{n,n} + x^{n-1} P^{(\nu)}_{n,n-1} + \cdots +  x P^{(\nu)}_{n,1} + P^{(\nu)}_{n,0}, 
\quad P^{(\nu)}_{n,n} = \Id, \; P^{(\nu)}_{n,i} \in M_{2\ell+1}(\C);
\nonumber
\end{gather}
a formula for the squared norm $H^{(\nu)}_n$, a positive definite diagonal matrix, 
can be found in \cite[Thm.~3.1(i)]{KoeldlRR}.
The matrix conjugation $\ast$ in our real setting simply means the transpose.

An important property of the matrix-valued Gegenbauer polynomials is
\begin{equation}\label{eq:derivativeMVGegenbauerpols}
\frac{dP^{(\nu)}_n}{dx}(x) = n \, P^{(\nu+1)}_{n-1}(x);
\end{equation}
see \cite[Thm.~3.1(ii)]{KoeldlRR}.

Similar to the scalar case, matrix-valued orthogonal polynomials satisfy a three-term recurrence relation.  In the case of matrix-valued Gegenbauer polynomials the relation assumes the explicit form \cite[Thm.~3.3]{KoeldlRR}
\begin{equation}\label{eq:3TTR-Pn}
xP^{(\nu)}_{n}(x)=P^{(\nu)}_{n+1}(x)+B_n^{(\nu)}P^{(\nu)}_{n}(x)+C^{(\nu)}_{n}P^{(\nu)}_{n-1}(x),
\end{equation}
where the matrices $B^{(\nu)}_n$ and $C^{(\nu)}_n$ are given by
\begin{align*}
B^{(\nu)}_n&=\sum_{j=1}^{2\ell}  \frac{j(j+\nu-1)}{2(j+n+\nu-1)(j+n+\nu)}E_{j,j-1}
\\ &\quad
 +\sum_{j=0}^{2\ell-1}
 \frac{(2\ell-j)(2\ell-j+\nu-1)}{2(2\ell-j+n+\nu-1)(2\ell+n-j+\nu)}E_{j,j+1},
\\
C^{(\nu)}_n&=\sum_{j=0}^{2\ell} \frac{n(n+\nu-1)(2\ell+n+\nu)(2\ell+n+2\nu-1)}
{4(2\ell+n+\nu-j-1)(2\ell+n+\nu-j)(j+n+\nu-1)(j+n+\nu)} E_{j,j}.
\end{align*}
Here $E_{i,j}$ denotes the matrix with $1$ on the $(i,j)$-entry and $0$ elsewhere. With the initial conditions $P^{(\nu)}_{-1}(x)=0$, 
$P^{(\nu)}_{0}(x)=\Id$, the recurrence \eqref{eq:3TTR-Pn} determines 
the family $(P^{(\nu)}_n(x))_{n\in \N}$ completely. 

The matrix-valued Gegenbauer polynomials can actually be made symmetric, a feature which seems to be rather uncommon. We define 
\begin{equation}\label{eq:def-symmetric-Pn}
\hat{P}_{n}^{(\nu)}(x) = D_n^{(\nu)}\,  {P}_{n}^{(\nu)}(x),
\qquad (D^{(\nu)}_n)_{i,j} = \de_{i,j} \binom{2\ell}{i}
\frac{(\nu+n)_i}{(\nu+n+2\ell-i)_i},
\end{equation}
and we show in Corollary \ref{cor:thm:expansionFnumr} that 
$\bigl(\hat{P}_{n}^{(\nu)}(x)\bigr)^t= \hat{P}_{n}^{(\nu)}(x)$, so that $\hat{P}_{n}^{(\nu)}(x)$ is symmetric.

\begin{remark}
(i)
Notice that the diagonal matrix $D_n^{(\nu)}$ depends only on $\nu+n$; in particular,
$D_n^{(\nu)} = D_{n-1}^{(\nu+1)}$ and by \eqref{eq:derivativeMVGegenbauerpols} we have 
\begin{equation}\label{eq:derivativesymmetricPn}
\frac{d}{dx} \hat{P}_{n}^{(\nu)}(x) =
n \, D_n^{(\nu)}\, \bigl( D_{n-1}^{(\nu+1)}\bigr)^{-1}\, 
\hat{P}_{n-1}^{(\nu+1)}(x) = n\, \hat{P}_{n-1}^{(\nu+1)}(x). 
\end{equation}

\noindent
(ii) Note that $JD_n^{(\nu)}=D_n^{(\nu)}J$, where as above $J$ is the 
matrix with $1$s along the antidiagonal. 
\end{remark}


\section{The expansion of Gegenbauer polynomials\\ in matrix-valued Gegenbauer polynomials}\label{sec:expansionMV-scalarGegenbauer}

The goal of this section is to establish two special cases of 
a connection formula between the matrix-valued Gegenbauer polynomials and 
the scalar Gegenbauer polynomials.
To start with, we define the matrices $F^{(\nu)}_{k,n}$, $k\in \{0,1,\dots, n\}$, by 
\begin{equation}\label{eq:defFnukn}
\hat{P}_{n}^{(\nu)}(x)  = \sum_{k=0}^{n} F^{(\nu)}_{k,n} \, 
C^{(\nu+2\ell)}_{n-k}(x) 
\end{equation}
and in dual setting we define 
$G^{(\nu)}_{k,n}$, $k\in \{0,1,\dots, n\}$, by 
\begin{equation}\label{eq:defGnurm1}
C^{(\nu)}_m(x) \Id = \sum_{r=0}^m G^{(\nu)}_{r,m} \hat{P}_{m-r}^{(\nu)}(x).
\end{equation}
We show that the summation ranges in \eqref{eq:defFnukn}, \eqref{eq:defGnurm1} are bounded by $2\ell$, so that the number of non-zero terms in 
\eqref{eq:defFnukn}, \eqref{eq:defGnurm1} is at most the size of the 
matrix-valued polynomials. 
We first describe the relation arising from 
\eqref{eq:derivativesymmetricPn}.

\begin{lemma}\label{lem:recurrencerelationsGF}
For $n,m\in \N$, $n\geq 1$, $m\geq 1$, we have
\begin{alignat*}{2}
F^{(\nu)}_{k,n} &= \frac{n}{2(\nu+2\ell)}\, F^{(\nu+1)}_{k,n-1},
&\quad& 0\leq k \leq n-1, \\
G^{(\nu)}_{r,m} &= \frac{2\nu}{m-r}\, G^{(\nu+1)}_{r,m-1}, 
&\quad& 0\leq r \leq m-1.
\end{alignat*}
\end{lemma}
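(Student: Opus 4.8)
The plan is to exploit the two differentiation identities already recorded in the excerpt. For the $F$-relation, start from the defining expansion \eqref{eq:defFnukn} for $\hat P_n^{(\nu)}$ and differentiate both sides in $x$. On the left, \eqref{eq:derivativesymmetricPn} gives $\frac{d}{dx}\hat P_n^{(\nu)}(x) = n\,\hat P_{n-1}^{(\nu+1)}(x)$. On the right, differentiate each term $F^{(\nu)}_{k,n}\,C^{(\nu+2\ell)}_{n-k}(x)$ using the classical rule $\frac{d}{dx}C_m^{(\la)}(x) = 2\la\,C_{m-1}^{(\la+1)}(x)$, which with $\la=\nu+2\ell$ produces $2(\nu+2\ell)\,F^{(\nu)}_{k,n}\,C^{(\nu+1+2\ell)}_{n-1-k}(x)$; the $k=n$ term dies. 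So $n\,\hat P_{n-1}^{(\nu+1)}(x) = \sum_{k=0}^{n-1} 2(\nu+2\ell)\,F^{(\nu)}_{k,n}\,C^{(\nu+1+2\ell)}_{n-1-k}(x)$. But by definition \eqref{eq:defFnukn} applied at parameter $\nu+1$ and degree $n-1$, we also have $\hat P_{n-1}^{(\nu+1)}(x) = \sum_{k=0}^{n-1} F^{(\nu+1)}_{k,n-1}\,C^{(\nu+1+2\ell)}_{n-1-k}(x)$. Matching coefficients of $C^{(\nu+1+2\ell)}_{n-1-k}(x)$ for $0\le k\le n-1$ — which is legitimate since these Gegenbauer polynomials are linearly independent as polynomials of distinct degrees — yields $n\,F^{(\nu+1)}_{k,n-1} = 2(\nu+2\ell)\,F^{(\nu)}_{k,n}$, i.e. the first claimed identity.

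For the $G$-relation the strategy is entirely parallel, using \eqref{eq:defGnurm1}. Differentiate $C_m^{(\nu)}(x)\,\Id = \sum_{r=0}^m G^{(\nu)}_{r,m}\,\hat P_{m-r}^{(\nu)}(x)$ in $x$. The left side becomes $2\nu\,C_{m-1}^{(\nu+1)}(x)\,\Id$ by the same classical derivative rule. The right side becomes $\sum_{r=0}^{m-1}(m-r)\,G^{(\nu)}_{r,m}\,\hat P_{m-1-r}^{(\nu+1)}(x)$ by \eqref{eq:derivativesymmetricPn}, where the $r=m$ term vanishes. On the other hand, \eqref{eq:defGnurm1} at parameter $\nu+1$ and degree $m-1$ reads $2\nu\,C_{m-1}^{(\nu+1)}(x)\,\Id = \sum_{r=0}^{m-1} 2\nu\,G^{(\nu+1)}_{r,m-1}\,\hat P_{m-1-r}^{(\nu+1)}(x)$. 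Since the matrix-valued polynomials $\hat P_{m-1-r}^{(\nu+1)}(x)$ for $0\le r\le m-1$ have distinct degrees and each has leading coefficient a nonzero (in fact invertible) matrix, they are linearly independent over $M_{2\ell+1}(\C)$ in the sense that an identity $\sum_r A_r\,\hat P_{m-1-r}^{(\nu+1)}(x)=0$ with matrix coefficients $A_r$ forces each $A_r=0$; hence comparing coefficients gives $(m-r)\,G^{(\nu)}_{r,m} = 2\nu\,G^{(\nu+1)}_{r,m-1}$, which is the second identity.

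The main thing to be careful about — rather than a genuine obstacle — is the coefficient-matching step in the matrix-valued setting for the $G$-relation: one must argue that expansions in $\hat P_j^{(\nu+1)}$ with matrix coefficients are unique. This follows because $\hat P_j^{(\nu+1)}(x) = D_j^{(\nu+1)} P_j^{(\nu+1)}(x)$ is monic up to multiplication by the invertible diagonal matrix $D_j^{(\nu+1)}$, so $\{\hat P_j^{(\nu+1)}\}_{j\ge0}$ is a basis of the right (or left) $M_{2\ell+1}(\C)$-module of matrix-valued polynomials, exactly as in the scalar case. A brief remark to that effect suffices; everything else is the elementary bookkeeping of matching degrees after applying the two derivative formulas.
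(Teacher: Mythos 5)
Your proposal is correct and follows essentially the same route as the paper: differentiate the defining expansions \eqref{eq:defFnukn} and \eqref{eq:defGnurm1}, use $\frac{d}{dx}C^{(\la)}_n=2\la\,C^{(\la+1)}_{n-1}$ together with \eqref{eq:derivativesymmetricPn}, and conclude by uniqueness of the expansion (the paper treats the $F$-case explicitly and notes the $G$-case is analogous, which is exactly your second computation). Your extra remark on why matrix-coefficient expansions in the $\hat P_j^{(\nu+1)}$ are unique is a harmless elaboration of the paper's appeal to uniqueness.
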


\begin{proof}
Differentiate \eqref{eq:defFnukn} using 
$\frac{d}{dx}C^{(\la)}_n(x) = 2\la\, C^{(\la+1)}_{n-1}(x)$
and \eqref{eq:derivativesymmetricPn} to find out that
\begin{align*}
n \,\hat{P}_{n-1}^{(\nu+1)}(x)
&= \sum_{k=0}^{2\ell} F^{(\nu)}_{k,n} \, 
2(\nu+2\ell)\, C^{(\nu+1+2\ell)}_{n-1-k}(x)
\\
&= 2(\nu+2\ell)\sum_{k=0}^{2\ell} 
\bigl( D_n^{(\nu)}\bigr)^{-1}\, D_{n-1}^{(\nu+1)}\, F^{(\nu)}_{k,n} \, 
 C^{(\nu+1+2\ell)}_{n-1-k}(x).
\end{align*}
On the other hand, applying \eqref{eq:defFnukn} with $(n,\nu)$ replaced 
by $(n-1,\nu+1)$ gives 
\begin{align*}
 F^{(\nu+1)}_{k,n-1}  = 
 \frac{2(\nu+2\ell)}{n} \,  F^{(\nu)}_{k,n}
\end{align*}
by uniqueness of the expansion. This proves the first statement, 
and the second one follows analogously.
\end{proof}

In particular, it follows from Lemma \ref{lem:recurrencerelationsGF} that
\begin{equation}\label{eq:reductionGrmtoGmm}
G^{(\nu)}_{m,m+k} = \frac{2^k \, (\nu)_k}{k!}\, G^{(\nu+k)}_{m,m}
\quad\text{and}\quad 
F^{(\nu)}_{m,m+k} = \frac{(m+1)_k}{2^k\, (\nu+2\ell)_k}\, 
F^{(\nu+k)}_{m,m}
\end{equation}
for $m,k\in \N$. Therefore, viewing $(G^{(\nu)}_{r,m})_{r,m\in \N}$ 
and $(F^{(\nu)}_{r,m})_{r,m\in \N}$ as infinite matrices we see that they are upper triangular, with \eqref{eq:reductionGrmtoGmm} 
showing that each element in the $r$-th row is determined by the 
element on the diagonal in the $r$-th row. 

Next we look at $G^{(\nu)}_{m,m}$ and $F^{(\nu)}_{m,m}$.  
From \eqref{eq:reductionGrmtoGmm} we see that the upper bound of the sum in 
\eqref{eq:defGnurm1}, respectively \eqref{eq:defFnukn}, can 
be replaced by $2\ell\wedge m =\min(2\ell,m)$, respectively $2\ell\wedge n = \min(2\ell,n)$, if we show that 
$G^{(\nu)}_{m,m} = F^{(\nu)}_{m,m} = 0$ for $m>2\ell$. 

\begin{lemma}\label{lem:Fnummis0formgreater2ell}
$F^{(\nu)}_{m,m} = 0$ for $m>2\ell$.
\end{lemma}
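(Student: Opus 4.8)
The plan is to interpret $F^{(\nu)}_{m,m}$ as a scalar integral and then make it vanish by orthogonality of $P^{(\nu)}_m$ against matrix polynomials of low degree. Since $C^{(\nu+2\ell)}_0(x)=1$, the orthogonality relations \eqref{eq:Gegenbauer-orthorelations} applied to \eqref{eq:defFnukn} give
\[
F^{(\nu)}_{m,m}=\frac1{c_\ell}\int_{-1}^1\hat P^{(\nu)}_m(x)\,(1-x^2)^{\nu+2\ell-\frac12}\,dx,\qquad c_\ell=\int_{-1}^1(1-x^2)^{\nu+2\ell-\frac12}\,dx\neq0 .
\]
As $D^{(\nu)}_m$ in \eqref{eq:def-symmetric-Pn} is invertible, it suffices to show that $\int_{-1}^1 P^{(\nu)}_m(x)\,(1-x^2)^{\nu+2\ell-\frac12}\,dx=0$ whenever $m>2\ell$.

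Next I would write $R(x)=(1-x^2)^{2\ell}\,W^{(\nu)}_{\pol}(x)^{-1}$, so that $W^{(\nu)}(x)R(x)=(1-x^2)^{\nu-\frac12}W^{(\nu)}_{\pol}(x)\cdot(1-x^2)^{2\ell}W^{(\nu)}_{\pol}(x)^{-1}=(1-x^2)^{\nu+2\ell-\frac12}\Id$, whence $\int_{-1}^1 P^{(\nu)}_m(x)(1-x^2)^{\nu+2\ell-\frac12}\,dx=\int_{-1}^1 P^{(\nu)}_m(x)\,W^{(\nu)}(x)\,R(x)\,dx$. By \eqref{eq:defmonicMVorthopol} and the monicity of the $P^{(\nu)}_j$, we have $\int_{-1}^1 P^{(\nu)}_m(x)W^{(\nu)}(x)S(x)\,dx=0$ for every matrix polynomial $S$ of degree $\le m-1$. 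Hence the statement reduces to the purely algebraic claim that $R$ is a matrix polynomial with $\deg R\le 2\ell\,(\le m-1)$.

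To prove this claim I would use the LDU-decomposition \eqref{eq:LDUweight} in the form $W^{(\nu)}_{\pol}=L^{(\nu)}\,\widetilde T\,(L^{(\nu)})^t$, where $\widetilde T$ is diagonal with $\widetilde T_{k,k}=t^{(\nu)}_k(1-x^2)^k$; thus $R=(1-x^2)^{2\ell}\,((L^{(\nu)})^{-1})^t\,\widetilde T^{-1}\,(L^{(\nu)})^{-1}$. Since $L^{(\nu)}$ is unipotent lower triangular with $(L^{(\nu)})_{m,k}$ of degree $m-k$, so is $(L^{(\nu)})^{-1}$: the entry $((L^{(\nu)})^{-1})_{k,i}$ is a polynomial of degree $\le k-i$ and vanishes for $k<i$; moreover $(1-x^2)^{2\ell}\,(\widetilde T^{-1})_{k,k}=(t^{(\nu)}_k)^{-1}(1-x^2)^{2\ell-k}$ is a polynomial of degree $2(2\ell-k)$ since $k\le 2\ell$. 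Summing over $k$ shows $R_{i,j}$ is a polynomial with $\deg R_{i,j}\le 4\ell-i-j$. This is not yet enough, so I would then invoke $JW^{(\nu)}(x)=W^{(\nu)}(x)J$, which gives $JW^{(\nu)}_{\pol}J=W^{(\nu)}_{\pol}$, hence $JRJ=R$, i.e.\ $R_{i,j}=R_{2\ell-i,2\ell-j}$; applying the bound $\deg R_{a,b}\le 4\ell-a-b$ to $(a,b)=(2\ell-i,2\ell-j)$ yields $\deg R_{i,j}\le i+j$ as well. Therefore $\deg R_{i,j}\le\min(4\ell-i-j,\,i+j)\le 2\ell$ for all $i,j$, so $\deg R\le 2\ell$, which completes the proof. (Equivalently, conjugating \eqref{eq:LDUweight} by $J$ produces a ``UDL''-decomposition $W^{(\nu)}_{\pol}=U\,\widehat T\,U^t$ with $U=JL^{(\nu)}J$ unipotent upper triangular and $\widehat T_{k,k}=t^{(\nu)}_{2\ell-k}(1-x^2)^{2\ell-k}$, from which $\deg R_{i,j}\le i+j$ drops out directly.)

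The main obstacle is exactly this degree bound: the triangular factorisation alone only yields $\deg R\le 4\ell$, which would force the weaker restriction $m>4\ell$; it is the reflection symmetry $J$ that pushes the bound down to the sharp value $2\ell$. A minor technical point to handle carefully is the standard fact that the monic $P^{(\nu)}_0,\dots,P^{(\nu)}_{m-1}$ span (over $M_{2\ell+1}(\C)$) all matrix polynomials of degree $<m$, which is what legitimises the orthogonality step $\int_{-1}^1 P^{(\nu)}_m W^{(\nu)} S\,dx=0$.
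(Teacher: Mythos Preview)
Your proof is correct and takes a genuinely different route from the paper's.

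The paper proceeds entry-wise: it invokes the explicit expression \cite[Thm.~3.4]{KoeldlRR} for $(P^{(\nu)}_m(x))_{i,j}$ as a sum $\sum_p z_p\,C^{(\nu+p)}_{m+i-p}(x)\,C^{(1-\nu-p)}_{p-j}(x)$, inserts this into the integral against $(1-x^2)^{\nu+2\ell-\frac12}$, and kills each term by scalar Gegenbauer orthogonality once $m>4\ell-i-j$. This only covers $i+j\le 2\ell$; the remaining half of the matrix is handled via the $J$-commutation $(P^{(\nu)}_m)_{i,j}=(P^{(\nu)}_m)_{2\ell-i,2\ell-j}$.

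Your argument avoids the explicit entry formula entirely. You rewrite the integral as $\int P^{(\nu)}_m\,W^{(\nu)}\,R\,dx$ with $R=(1-x^2)^{2\ell}W^{(\nu)}_{\pol}{}^{-1}$ and reduce the problem to bounding $\deg R$. The LDU-decomposition of the weight alone gives $\deg R_{i,j}\le 4\ell-i-j$, and the $J$-symmetry of the weight gives the complementary bound $\deg R_{i,j}\le i+j$; together they yield $\deg R\le 2\ell$, after which matrix orthogonality finishes the job. Thus both proofs lean on the $J$-symmetry in an essential way, but dually: the paper uses it on the \emph{polynomials} to pass from one half of the index set to the other, while you use it on the \emph{weight} to halve a degree bound. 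Your approach is more structural and portable (it needs only \eqref{eq:LDUweight} and $JW=WJ$, not the explicit expansion of $P^{(\nu)}_m$); the paper's approach is more hands-on and, as the authors note, in principle also computes $F^{(\nu)}_{m,m}$ for $m\le 2\ell$.
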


\begin{proof}
Note that the $(i,j)$-th entry of the matrix $F^{(\nu)}_{m,m}$ is a multiple of the integral 
\begin{equation}\label{eq:integralFnumm}
\int_{-1}^1 \bigl(P^{(\nu)}_m(x)\bigr)_{i,j} \, (1-x^2)^{\nu+2\ell-\frac12}\, dx
\end{equation}
and we need to show that this integral vanishes for $m>2\ell$. By  
\cite[Thm.~3.4]{KoeldlRR} we have 
\[
\bigl(P^{(\nu)}_m(x)\bigr)_{i,j} = 
\sum_{p=j}^{2\ell\wedge m+i} z_p\, C^{(\nu+p)}_{m+i-p}(x) C^{(1-\nu-p)}_{p-j}(x) 
\]
for some explicit constants $z_p$, so that the integral \eqref{eq:integralFnumm} equals
\begin{align*}
\sum_{p=j}^{2\ell\wedge m+i} z_p\, 
\int_{-1}^1 C^{(\nu+p)}_{m+i-p}(x) \bigl\{ C^{(1-\nu-p)}_{p-j}(x) 
(1-x^2)^{2\ell-p} \bigr\} (1-x^2)^{\nu+p-\frac12}\, dx. 
\end{align*}
Using the orthogonality \eqref{eq:Gegenbauer-orthorelations} and 
the fact that the term in the parentheses is a polynomial of 
degree $4\ell-p-j$, we see that the $p$-th term vanishes for 
$m+i-p> 4\ell-p-j$, that is, for $m>4\ell -i-j$. Therefore, the integral \eqref{eq:integralFnumm} vanishes for $m>2\ell$ when $i+j\leq 2\ell$.

Because $J$ commutes with $W^{(\nu)}(x)$ for all $x$, see  \cite[Prop.~2.6]{KoeldlRR}, we also have that the monic polynomials
$P^{(\nu)}_n(x)$ commute with $J$ by \cite[Lemma~3.1(2)]{KoelR}. 
In particular, $(P^{(\nu)}_m(x)\bigr)_{2\ell-i,2\ell-j}=
(P^{(\nu)}_m(x)\bigr)_{i,j}$ and 
the integral in \eqref{eq:integralFnumm} is zero for $m>2\ell$ 
for all $(i,j)$. 
\end{proof}

In principle the approach of Lemma \ref{lem:Fnummis0formgreater2ell} can also be used to calculate 
$F^{(\nu)}_{m,m}$ for $m\leq 2\ell$; this gives an explicit but rather complicated expression for $F^{(\nu)}_{k,m}$ with the help of \eqref{eq:reductionGrmtoGmm}.
In Theorem \ref{thm:expansionFnumr} we give a simple formula for $F^{(\nu)}_{k,m}$. 
We leave it open whether the summation formula induced from these two different calculations is of interest on its own.

For the matrices $G^{(\nu)}_{k,m}$ we first recall that we can rewrite
\eqref{eq:defmonicMVorthopol} as 
\begin{equation}\label{eq:orthorelhatPnKRR17}
\int_{-1}^1 \hat{P}_n^{(\nu)}(x) \, W(x)\, (\hat{P}_m^{(\nu)}(x))^{\ast}\, dx = \de_{m,n} \hat{H}_n^{(\nu)}
\end{equation}
with $\hat{H}_n^{(\nu)} = D_n^{(\nu)}\, H_n^{(\nu)}\, (D_n^{(\nu)})^\ast$ being a diagonal matrix as well. 
Then
\begin{align*}
\int_{-1}^1 C_m^{(\nu)}(x) \, W(x)\, (D^{(\nu)}_0)^\ast \, dx
&= \int_{-1}^1 \sum_{r=0}^m G^{(\nu)}_{r,m} \hat{P}_{m-r}^{(\nu)}(x) \, W(x)\, (\hat{P}^{(\nu)}_0(x))^\ast \, dx
\\
&= G^{(\nu)}_{m,m}\, \hat{H}_0^{(\nu)}
\end{align*}
and
\begin{equation}
\qquad G^{(\nu)}_{m,m}\, D_0^{(\nu)}  H_0^{(\nu)} = 
\int_{-1}^1 C_m^{(\nu)}(x) \, W(x)\,  dx.
\label{eq:calcGnumm-1}
\end{equation}
Note that $D_0^{(\nu)}  H_0^{(\nu)}$ is an explicit invertible diagonal matrix, so that all we need to do is to calculate the matrix entries of
\[
\int_{-1}^1 C_m^{(\nu)}(x) \, W(x)_{i,j}\,  dx
\]
to determine $G^{(\nu)}_{m,m}$.
Observe that $W(x)_{i,j}= W(x)_{j,i}$, since $W$ is Hermitian and real-valued for $x\in (-1,1)$.
Furthermore, $W(x)_{i,j}= W(x)_{2\ell-i,2\ell-j}$ by \cite[Prop.~2.6]{KoeldlRR}. 
This reduces our calculation to evaluating the integral
\[
\int_{-1}^1 C_m^{(\nu)}(x) \, W(x)_{i,j}\, dx
\]
for the case $i\geq j$, $i+j\leq 2\ell$.

\begin{lemma}\label{lem:integralGegenbauerWij} 
Assume $j\geq i$, $i+j\leq 2\ell$. Then 
\[
\int_{-1}^1 C_m^{(\nu)}(x) \, W(x)_{i,j}\, dx = 0
\]
when $m<j-i$ or $i+j\not\equiv m \bmod 2$ or $m> 2\ell$. 
\end{lemma}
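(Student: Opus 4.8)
The plan is to expand the entry $W(x)_{i,j}$ as in \eqref{eq:defweight} into a sum of scalar Gegenbauer polynomials $C^{(\nu)}_{i+j-2k}(x)$ times the explicit coefficients $\al_k(i,j)$, and then integrate term by term against $C_m^{(\nu)}(x)\,(1-x^2)^{\nu-1/2}$, applying the orthogonality relations \eqref{eq:Gegenbauer-orthorelations}. Since $W(x)_{i,j} = (1-x^2)^{\nu-1/2}\sum_k \al_k(i,j)\,C^{(\nu)}_{i+j-2k}(x)$ with $k$ ranging over $0\vee(i+j-2\ell)$ to $i\wedge j$, the integral $\int_{-1}^1 C_m^{(\nu)}(x)\,W(x)_{i,j}\,dx$ becomes $\sum_k \al_k(i,j)\int_{-1}^1 C_m^{(\nu)}(x)\,C^{(\nu)}_{i+j-2k}(x)\,(1-x^2)^{\nu-1/2}\,dx$, and by \eqref{eq:Gegenbauer-orthorelations} this vanishes unless $m = i+j-2k$ for some admissible $k$.

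First I would record the set of degrees $\{i+j-2k : 0\vee(i+j-2\ell)\le k\le i\wedge j\}$ that actually occur. Under the hypothesis $j\ge i$ we have $i\wedge j = i$, so the smallest $k$ gives the largest degree and vice versa; when $i+j\le 2\ell$ the lower limit $0\vee(i+j-2\ell)$ equals $0$, so $k$ runs from $0$ to $i$ and the degrees $i+j-2k$ run over $i+j, i+j-2, \dots, i+j-2i = j-i$, all of the same parity as $i+j$. Hence if $m$ has the opposite parity to $i+j$, every term in the expansion is orthogonal to $C_m^{(\nu)}$ and the integral is $0$; if $m < j-i$ or $m > i+j$, then $m$ lies outside the range of degrees appearing, so again every orthogonality integral vanishes. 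Finally, since $i+j\le 2\ell$ forces $i+j\le 2\ell$, the condition $m>2\ell$ implies $m>i+j$, which is already covered; thus the three stated vanishing conditions $m<j-i$, $i+j\not\equiv m\bmod 2$, $m>2\ell$ together exhaust the cases where no admissible $k$ satisfies $m=i+j-2k$.

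The argument is essentially bookkeeping: the only genuine input is the explicit form \eqref{eq:defweight} of the weight entries together with the orthogonality \eqref{eq:Gegenbauer-orthorelations}, and the reduction to $j\ge i$, $i+j\le 2\ell$ is exactly the symmetry $W(x)_{i,j}=W(x)_{j,i}=W(x)_{2\ell-i,2\ell-j}$ already used in the paragraph preceding the lemma. I do not anticipate a substantive obstacle here; the one point requiring a little care is correctly identifying the lower summation limit $0\vee(i+j-2\ell)$, which equals $0$ precisely because of the standing assumption $i+j\le 2\ell$ — this is what guarantees that the degree $j-i$ (attained at $k=i$) is the true minimum and that no smaller degrees intrude, so the bound $m<j-i$ in the statement is sharp.
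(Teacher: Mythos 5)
Your proposal is correct and follows exactly the paper's (much terser) argument: expand $W(x)_{i,j}$ via \eqref{eq:defweight} into scalar Gegenbauer polynomials of degrees $j-i, j-i+2,\dots,i+j$ (all of parity $i+j$, since $j\ge i$ and $i+j\le 2\ell$ make the summation range $0\le k\le i$) and apply the orthogonality relations \eqref{eq:Gegenbauer-orthorelations} term by term, noting that each of the three stated conditions puts $m$ outside this set of degrees. Nothing further is needed.
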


In particular, Lemma \ref{lem:integralGegenbauerWij} shows that the 
$r$-th term of the sum \eqref{eq:defGnurm1} vanishes when $r> 2\ell\wedge m$. 

\begin{proof}
Recall formula \eqref{eq:defweight}.
Using the orthogonality  relations \eqref{eq:Gegenbauer-orthorelations} and the fact that the matrix-valued Gegenbauer polynomials are symmetric, the statement follows. 
\end{proof}

\begin{thm}\label{thm:expansionGnumr}
Define
\begin{align*}
& \phi(\nu;i,j,r)
=\phi_\ell(\nu;i,j,r)
\\ &\;
=\binom{2\ell}r\binom r{\frac12(r+i-j)}\binom{2\ell-r}{\frac12(i+j-r)}{\binom{2\ell}i}^{-1}{\binom{2\ell}j}^{-1}
(\nu-r+j)(\nu+2\ell-r-j)
\\ &\;\quad\times
\,\frac{\Gamma(\nu+2\ell-r)\,\Gamma(\nu-\frac12(r-i+j))\,\Gamma(\nu-\frac12(r+i-j))}
{\Gamma(\nu+1-\frac12(r-i-j))\,\Gamma(\nu+2\ell+1-\frac12(r+i+j))}
\end{align*}
if $i+j\equiv r\bmod2$ and $\phi(\nu;i,j,r)=0$ otherwise.
Then for $0\leq r \leq 2\ell$, $m\in \N$, the matrices  
\begin{equation*}
(G_{r,m}^{(\nu)})_{i,j}=\frac{2^{m-r}}{(m-r)!\,\Gamma(\nu)}\cdot\phi(\nu+m;i,j,r)
\qquad i,j\in\{0,1,\dots,2\ell\}, 
\end{equation*}
satisfy 
\begin{equation*}
C^{(\nu)}_m(x) \Id = \sum_{r=0}^{m \wedge 2\ell} G^{(\nu)}_{r,m} \hat{P}_{m-r}^{(\nu)}(x).
\end{equation*}
\end{thm}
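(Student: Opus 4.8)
The plan is to exploit the reduction already available from Lemma~\ref{lem:recurrencerelationsGF} in the form \eqref{eq:reductionGrmtoGmm}, which says that $G^{(\nu)}_{m,m+k}=\frac{2^k(\nu)_k}{k!}G^{(\nu+k)}_{m,m}$. Writing $m'=m-r$ and applying this with the roles $(m,k)=(r,m-r)$, one sees that it suffices to prove the claimed formula on the diagonal, i.e. to show that $(G^{(\nu)}_{r,r})_{i,j}=\frac{1}{\Gamma(\nu)}\,\phi(\nu+r;i,j,r)$; the general case then follows since $\frac{2^{m-r}(\nu)_{m-r}}{(m-r)!}\cdot\frac{1}{\Gamma(\nu+m-r)}\phi(\nu+m;i,j,r)$ $=\frac{2^{m-r}}{(m-r)!\,\Gamma(\nu)}\phi(\nu+m;i,j,r)$, matching the stated $(G_{r,m}^{(\nu)})_{i,j}$. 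So the whole problem collapses to computing the diagonal matrices $G^{(\nu)}_{m,m}$.

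For the diagonal case I would use the integral representation \eqref{eq:calcGnumm-1}: $G^{(\nu)}_{m,m}\,D_0^{(\nu)}H_0^{(\nu)}=\int_{-1}^1 C_m^{(\nu)}(x)\,W^{(\nu)}(x)\,dx$, so that $(G^{(\nu)}_{m,m})_{i,j}$ is a known scalar multiple of $\int_{-1}^1 C_m^{(\nu)}(x)\,(W^{(\nu)}(x))_{i,j}\,dx$. By the symmetries $(W^{(\nu)})_{i,j}=(W^{(\nu)})_{j,i}=(W^{(\nu)})_{2\ell-i,2\ell-j}$ recalled before Lemma~\ref{lem:integralGegenbauerWij}, and by that lemma, it is enough to treat $j\geq i$, $i+j\leq 2\ell$, $i+j\equiv m\bmod 2$, $j-i\leq m\leq 2\ell$. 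In that range one substitutes the explicit expansion \eqref{eq:defweight} of $(W^{(\nu)}(x))_{i,j}$ as $(1-x^2)^{\nu-1/2}\sum_k\alpha_k(i,j)\,C^{(\nu)}_{i+j-2k}(x)$ and integrates term by term against $C_m^{(\nu)}(x)$ using the Gegenbauer orthogonality \eqref{eq:Gegenbauer-orthorelations}. Only the single term with $i+j-2k=m$ survives (when it lies in the summation range $0\vee(i+j-2\ell)\le k\le i\wedge j$), giving a closed-form answer: the product of $\alpha_{k}(i,j)$ at $k=\frac12(i+j-m)$ with the orthogonality norm $\frac{\pi\,2^{1-2\nu}\Gamma(m+2\nu)}{\Gamma(\nu)^2(m+\nu)\,m!}$. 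After dividing by the explicit diagonal factor $(D_0^{(\nu)}H_0^{(\nu)})_{i,i}$ (read off from \eqref{eq:def-symmetric-Pn}, the formula for $H^{(\nu)}_0$ in \cite[Thm.~3.1(i)]{KoeldlRR}, and the $t^{(\nu)}_k$, $L^{(\nu)}$ data), this yields an explicit expression for $(G^{(\nu)}_{m,m})_{i,j}$ in terms of Pochhammer symbols and factorials.

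The remaining work is purely bookkeeping: to verify that this expression equals $\frac{1}{\Gamma(\nu)}\phi(\nu+m;i,j,r)$ with $r=m$, i.e.\ that, after setting $k=\frac12(i+j-m)=\frac12(i+j-r)$, the combination $\alpha_k(i,j)\cdot\frac{\pi 2^{1-2\nu}\Gamma(m+2\nu)}{\Gamma(\nu)^2(m+\nu)m!}\big/(D_0^{(\nu)}H_0^{(\nu)})_{i,i}$ rearranges into the three binomial coefficients $\binom{2\ell}{r}\binom{r}{(r+i-j)/2}\binom{2\ell-r}{(i+j-r)/2}$, the inverse binomials $\binom{2\ell}{i}^{-1}\binom{2\ell}{j}^{-1}$, the linear factors $(\nu+m-r+j)(\nu+m+2\ell-r-j)$, and the ratio of four Gamma functions in $\phi$. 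I would organise this by grouping the $\nu$-dependent Pochhammer symbols in $\alpha_k$ and in $H^{(\nu)}_0$ into Gamma-quotients and matching them against the $\phi$-formula, then separately checking the $\ell$-dependent combinatorial factors; the factor $(-2\ell-\nu)_k$ in $\alpha_k$ should, together with pieces of $H_0^{(\nu)}$, reproduce $\Gamma(\nu+2\ell-r)$ in the numerator of $\phi$, and the two "extra" linear factors come from the $\frac{i+j-2k+\nu}{i+j-k+\nu}$ factor in $\alpha_k$ and the $(\nu+k-1)_k$-type denominators. Finally one checks consistency at the boundary of the summation range (the cases where $k$ falls outside $[\,0\vee(i+j-2\ell),\,i\wedge j\,]$) and confirms that $\phi(\nu+m;i,j,r)$ indeed vanishes there, as well as when $i+j\not\equiv r\bmod 2$ — which matches the parity constraint from Lemma~\ref{lem:integralGegenbauerWij}.

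The main obstacle I expect is the last step: the hypergeometric/Gamma-function identity verifying that the term-by-term integral equals $\phi(\nu+m;i,j,r)/\Gamma(\nu)$ is a multi-parameter computation where the $\ell$-dependence, the $\nu$-dependence, and the $(i,j)$-dependence are tangled together through $\alpha_k(i,j)$, the diagonal of $H^{(\nu)}_0$, and the normalising matrix $D_0^{(\nu)}$; keeping signs and the various half-integer arguments straight, and confirming the exact form of the symmetric $\alpha_k$ used here (which, as the authors note, differs from \cite{KoeldlRR}), is where the real care is needed. Everything else — the reduction to the diagonal, the orthogonality argument, and the parity/support bookkeeping — is routine given the results already established in the excerpt.
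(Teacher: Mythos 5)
Your proposal is correct and follows essentially the same route as the paper: compute $G^{(\nu)}_{m,m}$ from the integral identity \eqref{eq:calcGnumm-1} by inserting the weight expansion \eqref{eq:defweight} and the Gegenbauer orthogonality \eqref{eq:Gegenbauer-orthorelations} (with Lemma~\ref{lem:integralGegenbauerWij} and the symmetries of $W$ handling the vanishing entries), divide off the diagonal matrix $D_0^{(\nu)}H_0^{(\nu)}$, and then obtain all $G^{(\nu)}_{r,m}$ from the shift property \eqref{eq:reductionGrmtoGmm}. The only slip is cosmetic: since $D_0^{(\nu)}H_0^{(\nu)}$ sits to the \emph{right} of $G^{(\nu)}_{m,m}$ in \eqref{eq:calcGnumm-1}, the $(i,j)$ entry of the integral must be divided by $\bigl(D_0^{(\nu)}H_0^{(\nu)}\bigr)_{j,j}$ rather than by the $(i,i)$ entry.
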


\begin{proof}
The zero entries of the matrix in \eqref{eq:calcGnumm-1} given in Lemma \ref{lem:integralGegenbauerWij} can be ignored. 
Employing the formula
\begin{equation*}
(H^{(\nu)}_0)_{j,j} 
= \sqrt{\pi}\, \frac{\Ga(\nu+\frac12)}{\Ga(\nu+1)}\, 
(2\ell+\nu) \frac{j!\, (2\ell-j)!\, (\nu+1)_{2\ell}}{(2\ell)!\, 
(\nu+1)_j\, (\nu+1)_{2l-j}}
\end{equation*}
for the squared norm at $n=0$ and \eqref{eq:defweight} for $\al_k(i,j)$, using 
the expression for $D_n^{(\nu)}$ from \eqref{eq:def-symmetric-Pn} 
and the orthogonality relations \eqref{eq:Gegenbauer-orthorelations} 
in \eqref{eq:calcGnumm-1}, we can evaluate $G^{(\nu)}_{m,m}$. 
Then the formula for $G^{(\nu)}_{m,r}$ follows from the shift property \eqref{eq:reductionGrmtoGmm}.
A straightforward calculation gives the result. 
\end{proof}

As a corollary to the proof of Theorem \ref{thm:expansionGnumr}, we find symmetry properties for $G^{(\nu)}_{r,m}$. 

\begin{cor}
We have $G^{(\nu)}_{r,m}J= JG^{(\nu)}_{r,m}$, and the matrix
$G^{(\nu)}_{r,m} D^{(\nu)}_0H^{(\nu)}_0$ is symmetric.
\end{cor}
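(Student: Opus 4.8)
The plan is to derive both assertions directly from the closed formula for $G_{r,m}^{(\nu)}$ established in Theorem~\ref{thm:expansionGnumr}, exploiting the explicit dependence on $\phi_\ell$. Recall that $(G_{r,m}^{(\nu)})_{i,j}$ is a scalar multiple (independent of $i,j$) of $\phi(\nu+m;i,j,r)$, so both symmetry statements reduce to symmetries of the function $\phi$ together with the fact that $D_0^{(\nu)}H_0^{(\nu)}$ is an explicit invertible diagonal matrix.

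First I would prove $G_{r,m}^{(\nu)}J=JG_{r,m}^{(\nu)}$. Since $J_{i,j}=\de_{i+j,2\ell}$, conjugation by $J$ sends the $(i,j)$-entry of a matrix to its $(2\ell-i,2\ell-j)$-entry; hence the claim is equivalent to $\phi_\ell(\nu+m;i,j,r)=\phi_\ell(\nu+m;2\ell-i,2\ell-j,r)$ for all $i,j,r$. I would verify this by inspecting the formula for $\phi_\ell$ term by term under the substitution $(i,j)\mapsto(2\ell-i,2\ell-j)$: the parity condition $i+j\equiv r\bmod 2$ is preserved because $2\ell-i+2\ell-j\equiv i+j$; the binomials $\binom{2\ell}{i}^{-1}\binom{2\ell}{j}^{-1}$ are invariant; in $\binom{r}{\frac12(r+i-j)}$ the lower index becomes $\frac12(r-i+j)=r-\frac12(r+i-j)$, which leaves the binomial unchanged; in $\binom{2\ell-r}{\frac12(i+j-r)}$ the lower index becomes $\frac12((2\ell-i)+(2\ell-j)-r)=(2\ell-r)-\frac12(i+j-r)$, again leaving it unchanged. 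The remaining $\Gamma$-quotient: $\Gamma(\nu+2\ell-r)$ is untouched, and the pair $\Gamma(\nu-\frac12(r-i+j))\,\Gamma(\nu-\frac12(r+i-j))$ is simply swapped; finally the prefactor $(\nu-r+j)(\nu+2\ell-r-j)$ becomes $(\nu-r+2\ell-j)(\nu+2\ell-r-(2\ell-j))=(\nu+2\ell-r-j)(\nu-r+j)$, i.e.\ the same product. The denominator $\Gamma(\nu+1-\frac12(r-i-j))\Gamma(\nu+2\ell+1-\frac12(r+i+j))$: the first becomes $\Gamma(\nu+1-\frac12(r-(2\ell-i)-(2\ell-j)))=\Gamma(\nu+2\ell+1-\frac12(r+i+j)-\dots)$; here I need to check the bookkeeping carefully, but the two $\Gamma$-factors in the denominator get interchanged exactly as the two in the numerator do. So $\phi_\ell$ is invariant under $(i,j)\mapsto(2\ell-i,2\ell-j)$, which gives $JG=GJ$.

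For the symmetry of $G_{r,m}^{(\nu)}D_0^{(\nu)}H_0^{(\nu)}$, I would note that $(D_0^{(\nu)}H_0^{(\nu)})_{j,j}$ is a product $\binom{2\ell}{j}\frac{(\nu)_0}{(\nu+2\ell)_0}\cdot (H_0^{(\nu)})_{j,j}$, and using the displayed formula for $(H_0^{(\nu)})_{j,j}$ this diagonal entry equals (up to a $j$-independent factor) $\binom{2\ell}{j}\,\frac{j!\,(2\ell-j)!}{(\nu+1)_j\,(\nu+1)_{2\ell-j}}$, which simplifies to a $j$-independent constant times $\frac{1}{(\nu+1)_j(\nu+1)_{2\ell-j}}\cdot(2\ell)!$; let me just call it $d_j$. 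Then $(G_{r,m}^{(\nu)}D_0^{(\nu)}H_0^{(\nu)})_{i,j}=c_{r,m}\,\phi_\ell(\nu+m;i,j,r)\,d_j$, and symmetry amounts to $\phi_\ell(\nu+m;i,j,r)\,d_j=\phi_\ell(\nu+m;j,i,r)\,d_i$. I would establish this by comparing $\phi_\ell(\mu;i,j,r)$ with $\phi_\ell(\mu;j,i,r)$: swapping $i\leftrightarrow j$ fixes the parity condition, fixes $\binom{2\ell}{i}^{-1}\binom{2\ell}{j}^{-1}$, fixes $\binom{2\ell-r}{\frac12(i+j-r)}$ and $\Gamma(\nu+2\ell-r)$ and the two denominator $\Gamma$'s; it sends $\binom{r}{\frac12(r+i-j)}$ to $\binom{r}{\frac12(r-i+j)}$ (unequal in general), swaps the numerator pair $\Gamma(\nu-\frac12(r-i+j))\leftrightarrow\Gamma(\nu-\frac12(r+i-j))$ (product unchanged), and changes the prefactor $(\nu-r+j)(\nu+2\ell-r-j)$ to $(\nu-r+i)(\nu+2\ell-r-i)$. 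So the ratio $\phi_\ell(\mu;i,j,r)/\phi_\ell(\mu;j,i,r)$ equals $\frac{\binom{r}{(r+i-j)/2}}{\binom{r}{(r-i+j)/2}}\cdot\frac{(\nu-r+j)(\nu+2\ell-r-j)}{(\nu-r+i)(\nu+2\ell-r-i)}$, which I would rewrite using $\binom{r}{(r+i-j)/2}=\frac{r!}{(\frac{r+i-j}2)!(\frac{r-i+j}2)!}$ as a ratio of factorials, and check that it is exactly $d_i/d_j$ after substituting $\nu\to\nu+m$; the $(\nu-r+j)$ and Gamma-ratio factors should, upon inserting the explicit $\mu=\nu+m$, conspire with $d_i/d_j$. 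The main obstacle is precisely this last bookkeeping: the identity $\phi_\ell(\nu+m;i,j,r)/\phi_\ell(\nu+m;j,i,r)=d_i/d_j$ is a pure algebraic identity among Pochhammer/Gamma symbols and factorials, which requires patient but routine simplification. Everything before that is clean symmetry inspection, and since both facts follow \emph{as a corollary to the proof} of Theorem~\ref{thm:expansionGnumr}, one may equivalently observe directly from \eqref{eq:calcGnumm-1} that $C_m^{(\nu)}(x)W(x)$ commutes with $J$ (because $W$ does) and that $W(x)=W(x)^t$, giving both symmetries of $\int_{-1}^1 C_m^{(\nu)}(x)W(x)\,dx=G_{m,m}^{(\nu)}D_0^{(\nu)}H_0^{(\nu)}$ at once, and then pushing them to general $r$ via the shift \eqref{eq:reductionGrmtoGmm}; this representation-side argument is the cleanest route and avoids the factorial grind entirely.
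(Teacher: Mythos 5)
Your closing remark is, in fact, the paper's proof: the paper shows that $J$ commutes with $W(x)$, $D^{(\nu)}_0$ and $H^{(\nu)}_0$ (the latter via \cite{KoelR}), reads off from \eqref{eq:calcGnumm-1} that $G^{(\nu)}_{m,m}D^{(\nu)}_0H^{(\nu)}_0=\int_{-1}^1C^{(\nu)}_m(x)\,W^{(\nu)}(x)\,dx$ is symmetric and commutes with $J$ because $W(x)^t=W(x)$ and $JW(x)=W(x)J$, and then extends to general $r$ by the shift \eqref{eq:reductionGrmtoGmm}. Your main route\,---\,verifying both properties entry-wise from the closed formula of Theorem~\ref{thm:expansionGnumr}\,---\,is genuinely different. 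Its first half is fine: under $(i,j)\mapsto(2\ell-i,2\ell-j)$ the two numerator Gammas swap, the two denominator Gammas swap as well (the bookkeeping you flagged does work out), and the rest is invariant, so $JG^{(\nu)}_{r,m}=G^{(\nu)}_{r,m}J$ follows. A small slip: $\binom{r}{\frac12(r+i-j)}=\binom{r}{\frac12(r-i+j)}$ since the lower indices sum to $r$, so this factor is invariant under $i\leftrightarrow j$, contrary to your ``unequal in general''; this only simplifies your check.

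The gap is in the second half, precisely at the step you deferred as ``routine simplification''. Since everything in $\phi$ except the prefactor $(\nu-r+j)(\nu+2\ell-r-j)$ is invariant under $i\leftrightarrow j$ (the swapped numerator Gammas leave the product unchanged), the explicit formula gives
\[
\frac{\phi(\nu+m;i,j,r)}{\phi(\nu+m;j,i,r)}
=\frac{(\nu+m-r+j)(\nu+m+2\ell-r-j)}{(\nu+m-r+i)(\nu+m+2\ell-r-i)},
\]
whereas $(D^{(\nu)}_0H^{(\nu)}_0)_{j,j}$ is a $j$-independent constant times $\bigl((\nu+j)(\nu+2\ell-j)\bigr)^{-1}$, so $d_i/d_j=\frac{(\nu+j)(\nu+2\ell-j)}{(\nu+i)(\nu+2\ell-i)}$. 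These two ratios coincide only when $r=m$ (or trivially when $\{i,2\ell-i\}=\{j,2\ell-j\}$); for instance, with $2\ell=2$, $r=1$, $m=2$, $(i,j)=(0,1)$ the first is $(\nu+2)^2/\bigl((\nu+1)(\nu+3)\bigr)$ and the second is $(\nu+1)^2/\bigl(\nu(\nu+2)\bigr)$. So the ``pure algebraic identity among Pochhammer/Gamma symbols'' you planned to grind out is not an identity for $r<m$: what the explicit formula actually yields (and what the shift \eqref{eq:reductionGrmtoGmm}, which replaces $\nu$ by $\nu+m-r$, also yields) is symmetry of $G^{(\nu)}_{r,m}D^{(\nu+m-r)}_0H^{(\nu+m-r)}_0$, the case $r=m$ being the one covered directly by \eqref{eq:calcGnumm-1}. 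To repair your main route, do the entry-wise verification only for $r=m$ and then propagate via \eqref{eq:reductionGrmtoGmm}, tracking the parameter shift in the symmetrizing diagonal\,---\,or simply adopt the structural argument of your last sentence, which is the paper's.
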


\begin{proof}
The explicit expression of $D^{(\nu)}_n$ in \eqref{eq:def-symmetric-Pn} shows that $D^{(\nu)}_n$ commutes with $J$. Furthermore, $W(x)$ commutes with $J$ by 
\cite[Prop.~2.6]{KoeldlRR}. This implies that $J$ commutes with 
$P^{(\nu)}_n(x)$ and $H_n^{(\nu)}$ by \cite[Lemma~3.1]{KoelR} and the fact that $J^\ast = J$. In turn, this means that $J$ also commutes with 
$\hat{P}^{(\nu)}_n(x)$ and $\hat{H}_n^{(\nu)}$. With the help of \eqref{eq:calcGnumm-1} we deduce that
\begin{align*}
G^{(\nu)}_{m,m} = 
\int_{-1}^1 C_m^{(\nu)}(x) \, W^{(\nu)}(x)\, dx\, (D^{(\nu)}_0)^\ast \bigl(\hat{H}_0^{(\nu)}\bigr)^{-1} 
 = 
\int_{-1}^1 C_m^{(\nu)}(x) \, W^{(\nu)}(x)\, dx\, 
(H^{(\nu)}_0)^{-1} (D^{(\nu)}_0)^{-1},
\end{align*}
so that $G^{(\nu)}_{m,m}$ commutes with $J$. By \eqref{eq:reductionGrmtoGmm} we see that $G^{(\nu)}_{r,m}$ commutes with $J$. 
This also implies that 
$G^{(\nu)}_{m,m} D^{(\nu)}_0H^{(\nu)}_0$ is a symmetric matrix, hence 
the statement for $G^{(\nu)}_{r,m}$ follows from~\eqref{eq:reductionGrmtoGmm}.  
\end{proof}

Our next objective is to determine $F^{(\nu)}_{k,n}$ using Lemma 
\ref{lem:Fnummis0formgreater2ell}. We follow an indirect approach
and prove \eqref{eq:defFnukn} by showing that the right-hand
side of \eqref{eq:defFnukn} satisfies the three-term recurrence relation for the polynomials $\hat{P}^{(\nu)}_n(x)$. 
In other words, we want to show that these polynomials satisfy the three-term recurrence relation
\begin{equation}\label{eq:3TTR-symmPn}
x\hat{P}^{(\nu)}_{n}(x)= \hat{A}_n^{(\nu)} \hat{P}^{(\nu)}_{n+1}(x)+\hat{B}_n^{(\nu)}\hat{P}^{(\nu)}_{n}(x)+
\hat{C}^{(\nu)}_{n}\hat{P}^{(\nu)}_{n-1}(x) 
\end{equation}
with $\hat{A}_n^{(\nu)}= D^{(\nu)}_{n} (D^{(\nu)}_{n+1})^{-1}$, $\hat{B}_n^{(\nu)} = D^{(\nu)}_{n} B_n^{(\nu)} (D^{(\nu)}_{n})^{-1}$ and $\hat{C}_n^{(\nu)} = D^{(\nu)}_{n} C_n^{(\nu)} (D^{(\nu)}_{n-1})^{-1}$ with the matrices $B_n^{(\nu)}$, 
$C_n^{(\nu)}$ as in \eqref{eq:3TTR-Pn}. Note that 
$\hat{A}_n^{(\nu)}$ and $\hat{C}_n^{(\nu)}$ are symmetric, however 
$\hat{B}_n^{(\nu)}$ is not in general.

\begin{thm}\label{thm:expansionFnumr}
For $i,j,k\in\{0,1,\dots,2\ell\}$, define
\begin{align*}
\ga(\nu;i,j,k)
&=\ga_\ell(\nu;i,j,k)
\\
&=(-1)^k(\nu+2\ell)(\nu+2\ell-k)\binom{2\ell}k\binom k{\frac12(k+i-j)}\binom{2\ell-k}{\frac12(i+j-k)}
\\ &\,\quad\times
\frac{\Ga(\nu-\frac12(k-i-j))\,\Ga(\nu+2\ell-\frac12(k+i+j))}
{\Ga(\nu)\,\Ga(\nu+2\ell+1-\frac12(k-i+j))\,\Ga(\nu+2\ell+1-\frac12(k+i-j))}
\end{align*}
if $i+j\equiv k\bmod2$ and $\gamma(\nu;i,j,k)=0$ otherwise. 
Let the matrix $F^{(\nu)}_{k,n}$ be given by  
\begin{align*}
(F^{(\nu)}_{k,n})_{i,j}
=\frac{n!\,\Ga(\nu+2\ell)}{2^n} \ga(\nu+n;i,j,k);
\end{align*}
then 
\begin{equation*}
\hat{P}_{n}^{(\nu)}(x)  = \sum_{k=0}^{n\wedge 2\ell} F^{(\nu)}_{k,n} \, 
C^{(\nu+2\ell)}_{n-k}(x). 
\end{equation*}
\end{thm}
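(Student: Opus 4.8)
The plan is to pin $\hat P^{(\nu)}_n$ down by its three-term recurrence: I would verify that the asserted right-hand side obeys \eqref{eq:3TTR-symmPn} with the correct initial data, and then conclude by uniqueness. Put
\[
\widetilde P^{(\nu)}_n(x):=\sum_{k=0}^{n\wedge 2\ell}F^{(\nu)}_{k,n}\,C^{(\nu+2\ell)}_{n-k}(x),\qquad (F^{(\nu)}_{k,n})_{i,j}=\frac{n!\,\Gamma(\nu+2\ell)}{2^n}\,\gamma(\nu+n;i,j,k),
\]
with $\gamma$ as in the statement; by Lemma \ref{lem:Fnummis0formgreater2ell} together with \eqref{eq:reductionGrmtoGmm} this sum is genuinely finite. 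First I would check the initial data: $\widetilde P^{(\nu)}_{-1}(x)=0$ is the empty sum, while a short evaluation of $\gamma(\nu;i,j,0)$ --- where the binomial $\binom{0}{(i-j)/2}$ forces $i=j$ and the $\Gamma$-quotient collapses --- gives $F^{(\nu)}_{0,0}=D^{(\nu)}_0$, so $\widetilde P^{(\nu)}_0(x)=D^{(\nu)}_0=\hat P^{(\nu)}_0(x)$. Since $\hat A^{(\nu)}_n=D^{(\nu)}_n(D^{(\nu)}_{n+1})^{-1}$ is invertible, \eqref{eq:3TTR-symmPn} determines the whole sequence from $\hat P^{(\nu)}_{-1},\hat P^{(\nu)}_0$; hence it suffices to show $\widetilde P^{(\nu)}_n$ satisfies \eqref{eq:3TTR-symmPn} for every $n\ge 0$, after which $\widetilde P^{(\nu)}_n=\hat P^{(\nu)}_n$ and the theorem follows.

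To make the recurrence checkable, substitute $\widetilde P^{(\nu)}_n$ into \eqref{eq:3TTR-symmPn}, apply the scalar three-term recurrence \eqref{eq:Gegenbauer-3TRR} with parameter $\nu+2\ell$ to each $x\,C^{(\nu+2\ell)}_{n-k}(x)$, and expand the three terms on the right in the same scalar basis. Linear independence of the $C^{(\nu+2\ell)}_m(x)$ reduces \eqref{eq:3TTR-symmPn} to a family of matrix identities, one per degree $m$; writing $k=n+1-m$ and using that $\hat A^{(\nu)}_n,\hat C^{(\nu)}_n$ are diagonal while $\hat B^{(\nu)}_n$ is bidiagonal with zero diagonal, the $(i,j)$-entry of the $m$-th identity becomes, after setting $\mu=\nu+n$, a closed relation among $\gamma(\mu;i,j,k)$, $\gamma(\mu;i,j,k-2)$, $\gamma(\mu+1;i,j,k)$, $\gamma(\mu-1;i,j,k-2)$ and $\gamma(\mu;i\pm1,j,k-1)$ --- a several-parameter contiguous relation for the explicit product of $\Gamma$-functions and binomials defining $\gamma$. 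The shift relation \eqref{eq:reductionGrmtoGmm} (equivalently Lemma \ref{lem:recurrencerelationsGF}) can be used at the outset to express every $F$-matrix here through the diagonal ones $F^{(\nu')}_{k,k}$, which trims the work considerably. What remains is careful bookkeeping: the parity constraint $i+j\equiv k\bmod 2$ must be propagated, the summation limit changes from $n\wedge2\ell$ to $(n+1)\wedge2\ell$, and the edge rows $i\in\{0,2\ell\}$ (where one off-diagonal entry of $\hat B^{(\nu)}_n$ is absent) want a separate look; here the vanishing of $\binom{2\ell}{k}$, $\binom{k}{(k+i-j)/2}$, $\binom{2\ell-k}{(i+j-k)/2}$ off their natural ranges does most of the work, and one notes that $\gamma$ is symmetric under $(i,j)\mapsto(j,i)$ and under $(i,j)\mapsto(2\ell-i,2\ell-j)$, matching the symmetry of $\hat P^{(\nu)}_n$ and its commuting with $J$.

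The main obstacle is the verification of that reduced identity. For fixed $i,j,k,\ell$ each side is an explicit rational function of $\nu$ and $n$, so it looks mechanical; but the off-diagonal part of $\hat B^{(\nu)}_n$ genuinely couples the rows $i-1,i,i+1$, so it is not a single classical hypergeometric contiguous relation but a matrix one --- which is exactly where the matrix adaptation of Wilf--Zeilberger creative telescoping is needed. Concretely I would seek a matrix-valued certificate $T_k(x)$ for which the summand of $x\widetilde P^{(\nu)}_n-\hat A^{(\nu)}_n\widetilde P^{(\nu)}_{n+1}-\hat B^{(\nu)}_n\widetilde P^{(\nu)}_n-\hat C^{(\nu)}_n\widetilde P^{(\nu)}_{n-1}$ telescopes in $k$, the boundary terms vanishing by Lemma \ref{lem:Fnummis0formgreater2ell}; since no algorithm for matrix creative telescoping exists, producing and validating such a certificate by hand is the delicate computational heart of the argument. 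Everything else --- the reduction above and, once the identity is established, the passage back to \eqref{eq:3TTR-symmPn} and hence to $\widetilde P^{(\nu)}_n=\hat P^{(\nu)}_n$ --- is routine.
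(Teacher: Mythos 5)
Your proposal follows essentially the same route as the paper's proof: check the initial data ($\widetilde P^{(\nu)}_{-1}=0$ and $F^{(\nu)}_{0,0}=D^{(\nu)}_0$) and verify that the proposed sum satisfies the three-term recurrence \eqref{eq:3TTR-symmPn}, reducing via the scalar recurrence \eqref{eq:Gegenbauer-3TRR} and comparison of coefficients in the basis $C^{(\nu+2\ell)}_{n-k}$ to an entry-wise contiguous relation among finitely many values of $\gamma$ at shifted arguments. The only (cosmetic) divergence is your final appeal to a matrix telescoping certificate: once the coefficients are compared no telescoping is left to do --- after dividing by $\gamma(\nu+n;i,j,k+1)$ each coefficient identity is a single rational-function identity in the parameters, which the paper simply verifies by computer algebra.
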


Since $\binom{n}{k}$ is non-zero only for $n,k\in \N$ with  
$0\leq k\leq n$, we see that $F^{(\nu)}_{0,n}$ is a diagonal matrix.
Similarly, $F^{(\nu)}_{1,n}$ is zero on the diagonal and  has non-zero sub- and superdiagonals.

Since the matrices $F^{(\nu)}_{k,n}$ are symmetric, the following corollary is immediate. 

\begin{cor}\label{cor:thm:expansionFnumr}
The polynomials $\hat{P}_{n}^{(\nu)}(x)$ are symmetric, i.e. 
$\bigl(\hat{P}_{n}^{(\nu)}(x)\bigr)^t=\hat{P}_{n}^{(\nu)}(x)$.
In particular, $\bigl(P_{n}^{(\nu)}(x)\bigr)^t=
D_n^{(\nu)} \, P_{n}^{(\nu)}(x) (D_n^{(\nu)})^{-1}$. 
\end{cor}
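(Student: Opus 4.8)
The plan is to deduce the corollary from Theorem~\ref{thm:expansionFnumr} in two moves: first establish that every coefficient matrix $F^{(\nu)}_{k,n}$ is symmetric, and then transport this symmetry through the expansion \eqref{eq:defFnukn} and the defining relation $\hat{P}_n^{(\nu)}(x)=D_n^{(\nu)}P_n^{(\nu)}(x)$. Since $(F^{(\nu)}_{k,n})_{i,j}=\frac{n!\,\Ga(\nu+2\ell)}{2^n}\,\ga(\nu+n;i,j,k)$ up to a scalar prefactor independent of $i,j$, symmetry of $F^{(\nu)}_{k,n}$ is equivalent to the identity $\ga(\nu;i,j,k)=\ga(\nu;j,i,k)$, which I would verify by inspecting the explicit formula for $\ga$ factor by factor under the transposition $i\leftrightarrow j$.

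First I would dispose of the parity case: if $i+j\not\equiv k\bmod 2$ then $\ga(\nu;i,j,k)=0=\ga(\nu;j,i,k)$ by definition, so the symmetry holds trivially in that entry. On the complementary set $i+j\equiv k\bmod2$ I would go through the factors of $\ga(\nu;i,j,k)$. The prefactor $(-1)^k(\nu+2\ell)(\nu+2\ell-k)\binom{2\ell}{k}$, the binomial $\binom{2\ell-k}{\frac12(i+j-k)}$, the numerator Gamma factors $\Ga(\nu-\frac12(k-i-j))$ and $\Ga(\nu+2\ell-\frac12(k+i+j))$, and the denominator factor $\Ga(\nu)$ are all manifestly invariant under $i\leftrightarrow j$, since they depend on $i,j$ only through the symmetric combinations $k$ and $i+j$. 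The middle binomial is handled by the reflection identity $\binom{k}{a}=\binom{k}{k-a}$: swapping $i$ and $j$ sends $\binom{k}{\frac12(k+i-j)}$ to $\binom{k}{\frac12(k+j-i)}=\binom{k}{k-\frac12(k+i-j)}=\binom{k}{\frac12(k+i-j)}$. Finally, the two remaining denominator Gamma factors $\Ga(\nu+2\ell+1-\frac12(k-i+j))$ and $\Ga(\nu+2\ell+1-\frac12(k+i-j))$ are interchanged by the swap, so their product is unchanged. Hence $\ga$ is symmetric in $(i,j)$ and each $F^{(\nu)}_{k,n}$ is a symmetric matrix.

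With symmetry of the $F^{(\nu)}_{k,n}$ in hand, the first assertion follows at once: in $\hat{P}_n^{(\nu)}(x)=\sum_{k=0}^{n\wedge 2\ell}F^{(\nu)}_{k,n}C^{(\nu+2\ell)}_{n-k}(x)$ the quantities $C^{(\nu+2\ell)}_{n-k}(x)$ are scalars, so $\hat{P}_n^{(\nu)}(x)$ is a scalar combination of symmetric matrices and is therefore symmetric. For the ``in particular'' statement I would transpose the identity $\hat{P}_n^{(\nu)}(x)=D_n^{(\nu)}P_n^{(\nu)}(x)$ coming from \eqref{eq:def-symmetric-Pn}. Since $D_n^{(\nu)}$ is diagonal, real, and invertible (its diagonal entries are nonzero for $\nu>0$, $n\in\N$), it equals its own transpose, so
\begin{equation*}
\hat{P}_n^{(\nu)}(x)=\bigl(\hat{P}_n^{(\nu)}(x)\bigr)^t=\bigl(D_n^{(\nu)}P_n^{(\nu)}(x)\bigr)^t=\bigl(P_n^{(\nu)}(x)\bigr)^t D_n^{(\nu)},
\end{equation*}
and comparing with $\hat{P}_n^{(\nu)}(x)=D_n^{(\nu)}P_n^{(\nu)}(x)$ yields $\bigl(P_n^{(\nu)}(x)\bigr)^t D_n^{(\nu)}=D_n^{(\nu)}P_n^{(\nu)}(x)$; multiplying on the right by $(D_n^{(\nu)})^{-1}$ gives the claimed conjugation formula. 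The whole argument is routine; the only point requiring a little care is the bookkeeping of the half-integer arguments, and in particular the use of the binomial reflection identity to pair up the two asymmetric-looking factors, so there is no genuine obstacle beyond this verification.
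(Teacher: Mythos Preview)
Your proof is correct and follows exactly the approach the paper intends: the paper simply notes that the matrices $F^{(\nu)}_{k,n}$ are symmetric and declares the corollary immediate, while you have supplied the explicit factor-by-factor verification of $\ga(\nu;i,j,k)=\ga(\nu;j,i,k)$ and the routine unwinding of $\hat{P}_n^{(\nu)}=D_n^{(\nu)}P_n^{(\nu)}$. Nothing is missing or different in spirit.
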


\begin{proof}[Proof of Theorem \ref{thm:expansionFnumr}]
Lemma \ref{lem:Fnummis0formgreater2ell} gives us the 
bound $n\wedge 2\ell$ for the number of terms in the expansion. We prove the latter 
by showing that the right-hand side satisfies the same 
three-term recurrence \eqref{eq:3TTR-symmPn} and initial conditions for $n=-1$ and $n=0$. For $n=-1$, we have that the 
right-hand side is the zero matrix. For $n=0$, the right-hand side reduces to the diagonal matrix $F^{(\nu)}_{0,0}$ and by inspection 
\begin{equation*}
\bigl( F^{(\nu)}_{0,0}\bigr)_{i,i} = \Ga(\nu+2\ell)\, 
(\nu+2\ell)^2 \,\binom{2\ell}{i}\,  
\frac{\Ga(\nu+i)\, \Ga(\nu+2\ell-i)}{\Ga(\nu)\,  \Ga(\nu+2\ell+1)^2} = \binom{2\ell}{i}\, \frac{(\nu)_i}{(\nu+2\ell-i)_i} = (D^{(\nu)}_0)_{i,i}.
\end{equation*}
Thus, the initial values match, and it remains to show that the three-term recurrence relation \eqref{eq:3TTR-symmPn} is
satisfied by $\sum_{k=0}^{n\wedge 2\ell} F^{(\nu)}_{k,n} \, 
C^{(\nu+2\ell)}_{n-k}(x)$. 

Using the three-term recurrence relation \eqref{eq:Gegenbauer-3TRR} for the Gegenbauer polynomials 
and the explicit expressions we see that $(i,j)$-entry of 
$x\, \sum_{k=0}^{n\wedge 2\ell} F^{(\nu)}_{k,n} \, 
C^{(\nu+2\ell)}_{n-k}(x)$ consists of two sums in terms of the 
Gegenbauer polynomials $C^{(\nu+2\ell)}_{n-k}(x)$ for $k\geq 0$. Similarly, 
since the matrices $\hat{A}^{(\nu)}_{n}$, $\hat{B}^{(\nu)}_{n}$  and $\hat{C}^{(\nu)}_{n}$ that appears on the right-hand side of \eqref{eq:3TTR-symmPn} are either diagonal or have two non-zero diagonals, we see that the right-hand side involves four sums in terms of Gegenbauer polynomials $C^{(\nu+2\ell)}_{n-k}(x)$ with $k\geq 0$. 
Comparing the coefficients of $C^{(\nu+2\ell)}_{n-k}(x)$ on both sides, we see that we need the equality
\begin{align*}
&
\frac{n-k}{\nu+2\ell+n-k-1}\, \ga(\nu+n;i,j,k+1)
+\frac{2(\nu+2\ell)+n-k}{\nu+2\ell+n-k+1}\, \ga(\nu+n;i,j,k-1)
\\ &\quad
= \frac{(n+1)(\nu+n)(\nu+2\ell+n)}{(\nu+n+i)(\nu+2\ell+n-i)}\, \ga(\nu+n+1;i,j,k+1)
\\ &\quad\quad 
+ \frac{(\nu+i-1)(2\ell-i+1)}{(\nu+n+i)(\nu+2\ell+n-i)}\, \ga(\nu+n;i-1,j,k)
\\ &\quad\quad 
+\frac{(i+1)(\nu+2\ell-i-1)}{(\nu+n+i)(\nu+2\ell+n-i)}\, \ga(\nu+n;i+1,j,k)
\\ &\quad\quad 
+ \frac{(\nu+2\ell+n)(2\nu+2\ell+n-1)}{(\nu+n+i)(\nu+2\ell+n-i)(\nu+2\ell+n-1)}\, \ga(\nu+n-1;i,j,k-1)
\end{align*}
to be valid, where we have divided by the normalising constant 
$n!\,\Ga(\nu+2\ell)/2^{n+1}$. This identity is trivially true in case $i+j\not\equiv k\bmod2$, since all six terms equal zero. In the general case, we divide by $\ga(\nu+n;i,j,k+1)$ and the 
required identity becomes an identity involving rational functions in the parameters. This rational identity is checked (by computer algebra) to be valid. 
\end{proof}

The matrices $G^{(\nu)}_{r,m}$ and $F^{(\nu)}_{k,n}$ determined
in Theorems~\ref{thm:expansionGnumr} and \ref{thm:expansionFnumr} are related to expansion and summation formulae, which we state below. 

\begin{cor}\label{cor:thmsonGnurmandFnukn}
The following expansion holds for the matrix-valued polynomials
$\hat{P}^{(\nu)}_n(x)$\textup:
\begin{equation*}
\hat{P}^{(\nu)}_n(x) = \sum_{t=0}^{n\wedge 4\ell} 
M^{(\nu)}_t \hat{P}^{(\nu+2\ell)}_{n-t}(x), 
\quad\text{where}\;\;
M^{(\nu)}_t = \sum_{k=0\vee t-2\ell}^{n\wedge 2\ell} F^{(\nu)}_{k,n} G^{(\nu+2\ell)}_{t-k,n-k}.
\end{equation*}
\end{cor}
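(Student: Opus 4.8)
The plan is to compose the two connection formulas established in Theorems~\ref{thm:expansionGnumr} and \ref{thm:expansionFnumr}. First I would start from the expansion \eqref{eq:defFnukn} of $\hat P_n^{(\nu)}(x)$ in the scalar Gegenbauer polynomials $C^{(\nu+2\ell)}_{n-k}(x)$, valid for $0\le k\le n\wedge 2\ell$ by Lemma~\ref{lem:Fnummis0formgreater2ell}. Then to each scalar Gegenbauer polynomial $C^{(\nu+2\ell)}_{n-k}(x)$ appearing there I would apply Theorem~\ref{thm:expansionGnumr}, but with the parameter $\nu$ replaced by $\nu+2\ell$ and the degree $m$ replaced by $n-k$, yielding
\[
C^{(\nu+2\ell)}_{n-k}(x)\,\Id
=\sum_{s=0}^{(n-k)\wedge 2\ell} G^{(\nu+2\ell)}_{s,n-k}\,\hat P^{(\nu+2\ell)}_{n-k-s}(x).
\]
Substituting this into \eqref{eq:defFnukn} and interchanging the two finite sums gives
\[
\hat P^{(\nu)}_n(x)=\sum_{k}\sum_{s} F^{(\nu)}_{k,n} G^{(\nu+2\ell)}_{s,n-k}\,\hat P^{(\nu+2\ell)}_{n-k-s}(x).
\]

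Next I would reindex the double sum by the total shift $t=k+s$, so that the inner sum ranges over $k$ for fixed $t$; this produces exactly $M^{(\nu)}_t=\sum_k F^{(\nu)}_{k,n} G^{(\nu+2\ell)}_{t-k,n-k}$ as the coefficient of $\hat P^{(\nu+2\ell)}_{n-t}(x)$. I would then pin down the summation ranges: $k$ must satisfy $0\le k\le n\wedge 2\ell$ for $F^{(\nu)}_{k,n}$ to be (possibly) nonzero, and $s=t-k$ must satisfy $0\le s\le (n-k)\wedge 2\ell$ for $G^{(\nu+2\ell)}_{t-k,n-k}$ to be nonzero; the condition $s\ge 0$ gives $k\le t$ and $s\le 2\ell$ gives $k\ge t-2\ell$, while $s\le n-k$ is automatic once $k\le n$. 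Combining, $k$ runs over $\max(0,t-2\ell)\le k\le \min(t,n\wedge 2\ell)$, which one can loosen to the stated $0\vee t-2\ell\le k\le n\wedge 2\ell$ since the extra terms vanish. The outer index $t=k+s$ then ranges from $0$ to $(n\wedge 2\ell)+(2\ell)\le n\wedge 4\ell$ (using that $t\le n$ always), matching the claimed upper limit $n\wedge 4\ell$.

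Finally I would remark that the whole argument is purely formal manipulation of two convergent finite expansions, with uniqueness of the expansion in the basis $\{\hat P^{(\nu+2\ell)}_j(x)\}_j$ guaranteeing that the resulting coefficients $M^{(\nu)}_t$ are well-defined and that no further identity is needed. There is essentially no analytic obstacle here; the only point requiring a little care is the bookkeeping of the index ranges and checking that the stated summation bounds in the corollary are consistent with the support of $F^{(\nu)}_{k,n}$ and $G^{(\nu+2\ell)}_{t-k,n-k}$, i.e. that enlarging the range of $k$ to $0\vee t-2\ell\le k\le n\wedge 2\ell$ only adds terms that are identically zero. That verification is the main (and quite mild) obstacle; everything else is a one-line substitution and reindexing.
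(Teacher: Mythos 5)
Your proposal is correct and follows exactly the paper's argument: the paper proves the corollary by first applying Theorem~\ref{thm:expansionFnumr} and then Theorem~\ref{thm:expansionGnumr} (with $\nu$ shifted to $\nu+2\ell$), which is precisely your substitution-and-reindexing by $t=k+s$. Your extra care with the index ranges (and noting that the enlarged range only adds vanishing terms) is consistent with, and slightly more explicit than, the paper's one-line justification.
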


Corollary \ref{cor:thmsonGnurmandFnukn} follows immediately 
by first applying Theorem \ref{thm:expansionFnumr} and next 
Theorem~\ref{thm:expansionGnumr}. Note that 
Corollary \ref{cor:thmsonGnurmandFnukn} is a matrix analogue of a very specific case of~\eqref{eq:connectionGegenbauerpol-integer}. 

\begin{cor}\label{cor:thmsonGnurmandFnukn2}
The following double summation result holds\textup: for $s\in \N$ with 
$0\leq s\leq \lfloor m/2\rfloor$ and indices $i,j$, we have
\begin{align*}
\sum_{p=0}^{2\ell}
\sum_{r=0\vee 2s-2\ell}^{m\wedge 2\ell}
(G^{(\nu)}_{r,m})_{i,p}  (F^{(\nu)}_{2s-r,m-r})_{p,j} 
= \de_{i,j}
\,\frac{\nu+2\ell+m-2s}{\nu+2\ell}
\,\frac{(\nu)_{m-s}}{(\nu+2\ell+1)_{m-s}} 
\,\frac{(-2\ell)_s}{s!}.
\end{align*}
\end{cor}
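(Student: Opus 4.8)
The plan is to obtain the double summation formula in Corollary~\ref{cor:thmsonGnurmandFnukn2} as a consequence of Corollary~\ref{cor:thmsonGnurmandFnukn} together with the scalar connection formula \eqref{eq:connectionGegenbauerpol-integer}, rather than by manipulating the explicit $\phi$ and $\gamma$ of Theorems~\ref{thm:expansionGnumr} and~\ref{thm:expansionFnumr} directly. First I would substitute $C_m^{(\nu)}(x)\Id$ on the left via \eqref{eq:defGnurm1}, i.e. $C_m^{(\nu)}(x)\Id=\sum_{r=0}^{m\wedge2\ell}G^{(\nu)}_{r,m}\hat P_{m-r}^{(\nu)}(x)$, and then expand each $\hat P_{m-r}^{(\nu)}(x)$ in the basis $\{\hat P_{\bullet}^{(\nu+2\ell)}\}$ by Corollary~\ref{cor:thmsonGnurmandFnukn} applied with $n$ replaced by $m-r$. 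This gives
\begin{equation*}
C_m^{(\nu)}(x)\Id=\sum_{r=0}^{m\wedge2\ell}\sum_{t=0}^{(m-r)\wedge4\ell}\sum_{k=0\vee t-2\ell}^{(m-r)\wedge2\ell}G^{(\nu)}_{r,m}F^{(\nu)}_{k,m-r}G^{(\nu+2\ell)}_{t-k,m-r-k}\,\hat P^{(\nu+2\ell)}_{m-r-t}(x).
\end{equation*}
Because the $G$'s are upper triangular and $F^{(\nu)}_{k,m-r}$ shifts degree by $k$, the total degree drop is $r+t$; I would reindex by collecting all terms with $m-r-t=m-2s$, i.e. setting $t=2s-r$, so that $\hat P^{(\nu+2\ell)}_{m-2s}(x)$ appears with matrix coefficient $\sum_{r}\sum_{k}G^{(\nu)}_{r,m}F^{(\nu)}_{k,m-r}G^{(\nu+2\ell)}_{2s-r-k,m-r-k}$. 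One checks the parity constraint forces $r+t$ even, hence only even total drops survive, explaining why the right-hand index variable is $s$ with $0\le s\le\lfloor m/2\rfloor$; terms with $m-r-t$ odd vanish.

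Next I would identify the left-hand side's expansion in the $\{\hat P^{(\nu+2\ell)}\}$ basis independently. Apply \eqref{eq:defGnurm1} at parameter $\nu+2\ell$ in the form $C_{m-2s}^{(\nu+2\ell)}(x)\Id=\sum_{u}G^{(\nu+2\ell)}_{u,m-2s}\hat P^{(\nu+2\ell)}_{m-2s-u}(x)$, and combine with the scalar connection formula \eqref{eq:connectionGegenbauerpol-integer} with $N=2\ell$, which reads
\begin{equation*}
C_m^{(\nu)}(x)=\sum_{s=0}^{\lfloor m/2\rfloor\wedge2\ell}\frac{(\nu+2\ell+m-2s)\,(\nu)_{m-s}}{(\nu+2\ell)_{m-s+1}}\frac{(-2\ell)_s}{s!}\,C_{m-2s}^{(\nu+2\ell)}(x).
\end{equation*}
Multiplying by $\Id$ and re-expanding each $C_{m-2s}^{(\nu+2\ell)}(x)\Id$ in $\{\hat P^{(\nu+2\ell)}\}$ gives a second expression for the coefficient of $\hat P^{(\nu+2\ell)}_{m-2s}(x)$, namely $\frac{(\nu+2\ell+m-2s)(\nu)_{m-s}}{(\nu+2\ell)_{m-s+1}}\frac{(-2\ell)_s}{s!}\,G^{(\nu+2\ell)}_{0,m-2s}$, plus contributions from $C^{(\nu+2\ell)}_{m-2s'}$ with $s'<s$ through their $G^{(\nu+2\ell)}_{u,m-2s'}$ with $u=2(s-s')>0$. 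To isolate the claimed formula I would instead match the coefficient of the top polynomial $\hat P^{(\nu+2\ell)}_{m-2s}(x)$ \emph{for each fixed $s$}: on the scalar side only the $s$-term of \eqref{eq:connectionGegenbauerpol-integer} contributes to degree exactly $m-2s$ through its leading term $G^{(\nu+2\ell)}_{0,m-2s}$, whereas lower scalar terms contribute only to strictly higher degrees; since $G^{(\nu+2\ell)}_{0,m-2s}$ is the diagonal matrix $D^{(\nu+2\ell)}_0$ (because $\hat P^{(\nu+2\ell)}_{m-2s}$ is monic times $D$), comparing with the matrix side and using uniqueness of the expansion in the orthogonal basis $\{\hat P^{(\nu+2\ell)}_n\}$ yields exactly
\begin{equation*}
\sum_{r}\sum_{k}G^{(\nu)}_{r,m}F^{(\nu)}_{k,m-r}G^{(\nu+2\ell)}_{2s-r-k,m-r-k}=\frac{(\nu+2\ell+m-2s)(\nu)_{m-s}}{(\nu+2\ell)_{m-s+1}}\frac{(-2\ell)_s}{s!}\,D^{(\nu+2\ell)}_0.
\end{equation*}
Finally I would relabel: writing $2s-r-k$ as a single index and noting the double sum in the statement is over $p$ (the intermediate matrix index contracting $G^{(\nu)}_{r,m}$ against $F^{(\nu)}_{2s-r,m-r}$) and over $r$ with $0\vee2s-2\ell\le r\le m\wedge2\ell$, the product $G^{(\nu)}_{r,m}F^{(\nu)}_{2s-r,m-r}$ having a scalar (diagonal) residual times $G^{(\nu+2\ell)}_{0,m-2s}=D^{(\nu+2\ell)}_0$ already absorbed; dividing both sides by the invertible diagonal $D^{(\nu+2\ell)}_0$ and matching $(i,j)$ entries produces the stated identity with $\frac{\nu+2\ell+m-2s}{\nu+2\ell}\frac{(\nu)_{m-s}}{(\nu+2\ell+1)_{m-s}}\frac{(-2\ell)_s}{s!}$ on the right (using $(\nu+2\ell)_{m-s+1}=(\nu+2\ell)\,(\nu+2\ell+1)_{m-s}$), and $\delta_{i,j}$ because the right-hand side is diagonal.

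The main obstacle I anticipate is bookkeeping the three nested summation ranges and the parity/triangularity constraints carefully enough to be sure that, for fixed $s$, exactly the terms with $r+k+(2s-r-k)=2s$ and total degree drop $m-2s$ survive, with no stray contributions from $\hat P^{(\nu+2\ell)}_{m-2s'}$, $s'\ne s$ — in other words, justifying the term-by-term comparison. This is handled by invoking the uniqueness of the expansion of a matrix-valued polynomial in the orthogonal family $\{\hat P^{(\nu+2\ell)}_n\}_{n\in\N}$ (which is exactly what powered the proof of Lemma~\ref{lem:recurrencerelationsGF}), so that once both sides are written as $\sum_n (\text{matrix})\,\hat P^{(\nu+2\ell)}_n(x)$ one may equate coefficients of $\hat P^{(\nu+2\ell)}_{m-2s}(x)$ directly; the only genuinely computational residue is then simplifying the scalar prefactor via the Pochhammer split $(\nu+2\ell)_{m-s+1}=(\nu+2\ell)(\nu+2\ell+1)_{m-s}$, which is routine. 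An alternative, fully explicit route — plugging the closed forms for $(G^{(\nu)}_{r,m})_{i,p}$, $(F^{(\nu)}_{2s-r,m-r})_{p,j}$ from Theorems~\ref{thm:expansionGnumr} and~\ref{thm:expansionFnumr} into the left side and evaluating the resulting finite hypergeometric-type sum over $p$ and $r$ — is available as a fallback and can be verified by computer algebra exactly as in the proof of Theorem~\ref{thm:expansionFnumr}, but the structural argument above is cleaner and I would present that one.
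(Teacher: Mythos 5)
Your overall strategy (expand $C_m^{(\nu)}\Id$ using the two connection results and compare with the scalar formula \eqref{eq:connectionGegenbauerpol-integer} at $N=2\ell$) is the right idea, but the specific route you chose has a genuine gap. The paper expands $C_m^{(\nu)}(x)\Id=\sum_r G^{(\nu)}_{r,m}\hat P^{(\nu)}_{m-r}(x)$ and then applies Theorem~\ref{thm:expansionFnumr} to each $\hat P^{(\nu)}_{m-r}$, landing in the \emph{scalar} basis $\{C^{(\nu+2\ell)}_{m-t}\}$; comparing coefficients there with \eqref{eq:connectionGegenbauerpol-integer} (by linear independence of the scalar Gegenbauer polynomials) gives exactly $\sum_r G^{(\nu)}_{r,m}F^{(\nu)}_{2s-r,m-r}=c_s\Id$, and the $p$-sum in the statement is just the matrix-product index. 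You instead insert Corollary~\ref{cor:thmsonGnurmandFnukn}, i.e.\ you pass to the \emph{matrix-valued} basis $\{\hat P^{(\nu+2\ell)}_{\bullet}\}$, which drags in an extra factor $G^{(\nu+2\ell)}_{t-k,m-r-k}$. When you then match coefficients of $\hat P^{(\nu+2\ell)}_{m-2s}$, the matrix side is $\sum_{r,k}G^{(\nu)}_{r,m}F^{(\nu)}_{k,m-r}G^{(\nu+2\ell)}_{2s-r-k,m-r-k}$, not the sum in the corollary: the terms with $2s-r-k>0$ do not go away and are in general non-diagonal. On the scalar side your claim that ``only the $s$-term contributes to degree exactly $m-2s$'' is false and contradicts your own previous sentence: each $C^{(\nu+2\ell)}_{m-2s'}$ with $s'<s$ contributes through $G^{(\nu+2\ell)}_{2(s-s'),m-2s'}$. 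So the coefficient comparison in the $\hat P^{(\nu+2\ell)}$ basis yields a ``smeared'' identity, and recovering the stated one from it would require an additional inversion or induction argument that you do not supply (and which is not routine, since the extra $G^{(\nu+2\ell)}_{u,\cdot}$ with $u>0$ are not diagonal).

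A second, smaller error: $G^{(\nu+2\ell)}_{0,m-2s}\neq D^{(\nu+2\ell)}_0$. Matching leading coefficients in \eqref{eq:defGnurm1} gives
\begin{equation*}
G^{(\nu+2\ell)}_{0,n}=\frac{2^{n}\,(\nu+2\ell)_{n}}{n!}\,\bigl(D^{(\nu+2\ell)}_{n}\bigr)^{-1},
\end{equation*}
which depends on $n$, so the ``already absorbed'' step in your final paragraph does not hold as written. The fix is simply to drop the detour through Corollary~\ref{cor:thmsonGnurmandFnukn}: apply Theorem~\ref{thm:expansionGnumr} and then Theorem~\ref{thm:expansionFnumr}, compare coefficients of the scalar polynomials $C^{(\nu+2\ell)}_{m-t}(x)$ with \eqref{eq:connectionGegenbauerpol-integer} (the odd-$t$ coefficients vanish, the even ones give the scalar prefactor times $\Id$ after the split $(\nu+2\ell)_{m-s+1}=(\nu+2\ell)(\nu+2\ell+1)_{m-s}$), and take $(i,j)$ entries.
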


\begin{proof}
First apply Theorem \ref{thm:expansionGnumr} and then Theorem \ref{thm:expansionFnumr} to deduce 
\begin{equation*}
C^{(\nu)}_m(x)\Id = 
\sum_{r=0}^{m\wedge 2\ell} \sum_{k=0}^{ (m-r)\wedge 2\ell}
G^{(\nu)}_{r,m} F^{(\nu)}_{k,m-r} \, C^{(\nu+2\ell)}_{m-p};
\end{equation*}
this connection formula of the scalar Gegenbauer polynomials is known: see \eqref{eq:connectionGegenbauerpol-integer} with $N=2\ell$. 
Comparing the two we conclude that
\[
\sum_{r=0\vee t-2\ell}^{m\wedge 2\ell} G^{(\nu)}_{r,m} F^{(\nu)}_{t-r,m-r}=\bold0
\]
for $t$ odd (which is already clear from the definitions in Theorems~\ref{thm:expansionGnumr} and~\ref{thm:expansionFnumr}), and
\begin{equation*}
\sum_{r=0\vee 2s-2\ell}^{m\wedge 2\ell} G^{(\nu)}_{r,m} F^{(\nu)}_{2s-r,m-r}=
\frac{\nu+2\ell+m-2s}{\nu+2\ell}
\,\frac{(\nu)_{m-s}}{(\nu+2\ell+1)_{m-s}} 
\,\frac{(-2\ell)_s}{s!} \, \Id
\end{equation*}
for $t=2s$ even.
Taking the $(i,j)$-th entry we obtain the desired result. 
\end{proof}

The identity in Corollary \ref{cor:thmsonGnurmandFnukn2} gives an example of a matrix hypergeometric summation formula;
we are not aware of its proof using classical hypergeometric identities.


\section{Differential and difference equations\\ related to matrix-valued Gegenbauer polynomials}
\label{sec:differenceeqtFknnu}

Since the matrix-valued Gegenbauer polynomials $\hat{P}^{(\nu)}_n$ are 
symmetric by Corollary \ref{cor:thm:expansionFnumr}, we can 
take the transposed version of identities for 
the polynomials $\hat{P}^{(\nu)}_n$ and compare the resulting identity to the original one. This gives various identities for the matrix-valued polynomials $\hat{P}^{(\nu)}_n$;
with the help of Theorem~\ref{thm:expansionFnumr} we can rewrite those in terms of recurrence relations for the 
matrices $F^{(\nu)}_{k,n}$. Therefore, the fact that 
$\hat{P}^{(\nu)}_n$ are symmetric polynomials gives 
a series of identities which can be viewed as mixed differential-difference equations, where the differential and difference operators can act from both sides. This procedure can be performed for any set of 
difference, respectively differential, equations that occur in the left, respectively right, Fourier algebras associated to the matrix-valued Gegenbauer polynomials, see 
\cite{CaspY} for the definition. The results obtained do \emph{not} have any classical analogues, since in the scalar case all commutators vanish. The resulting identities have a true matrix nature. 

In this section we outline this procedure for three explicit situations. First we consider the 
three-term recursion for the matrix-valued Gegenbauer polynomials $\hat{P}^{(\nu)}_n$, and next we deal with the two matrix differential operators that have the matrix-valued Gegenbauer polynomials $\hat{P}^{(\nu)}_n$ as eigenfunctions, see \cite[Thms.~2.3, 3.2]{KoeldlRR}.


We start with the three-term recursion \eqref{eq:3TTR-symmPn}, which is the basic example of an element in the left Fourier algebra. The symmetry of the matrix-valued orthogonal polynomials $\hat{P}_{n}^{(\nu)}(x)$ shows that the polynomials also satisfy a three-term recurrence relation with matrix multiplication by matrices depending on $n$ from the right, i.e. 
\begin{equation} \label{eq:3TTR-symmPnT}
x\hat{P}^{(\nu)}_{n}(x)
=  \hat{P}^{(\nu)}_{n+1}(x) \hat{A}^{(\nu)}_{n} + 
\hat{P}^{(\nu)}_{n}(x) (\hat{B}_n^{(\nu)})^t +
\hat{P}^{(\nu)}_{n-1}(x)\hat{C}^{(\nu)}_{n} 
\end{equation}
taking into account that the diagonal matrices $\hat{A}^{(\nu)}_{n}$ and $\hat{C}^{(\nu)}_{n}$ are automatically symmetric.

\begin{prop}\label{prop:recursionfrom3TRsymmetricPn}
The symmetric matrix-valued Gegenbauer polynomials satisfy 
\begin{equation*} 
[\hat{P}^{(\nu)}_{n+1}(x), \hat{A}^{(\nu)}_{n}]
+ \hat{P}^{(\nu)}_{n}(x) \bigl( \hat{B}_n^{(\nu)}\bigr)^t
- \hat{B}_n^{(\nu)} \hat{P}^{(\nu)}_{n}(x) 
+ [\hat{P}^{(\nu)}_{n-1}(x), \hat{C}^{(\nu)}_{n}]=0.
\end{equation*}
In turn, the matrices $F^{(\nu)}_{k,n}$ defined in Theorem \ref{thm:expansionFnumr} satisfy 
\begin{align*}
[F^{(\nu)}_{k+1,n+1}, \hat{A}^{(\nu)}_n] 
+ F^{(\nu)}_{k,n} (\hat{B}^{(\nu)}_n)^t 
- \hat{B}^{(\nu)}_n F^{(\nu)}_{k,n} 
+ [F^{(\nu)}_{k-1,n-1}, \hat{C}^{(\nu)}_n]
= 0,
\end{align*}
with the convention that $F^{(\nu)}_{k,n}=0$ for $k>n\wedge 2\ell$ or for $k<0$, and $\hat{C}^{(\nu)}_0=0$.  
\end{prop}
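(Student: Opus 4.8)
The plan is to derive the first displayed identity directly from the two three-term recurrences \eqref{eq:3TTR-symmPn} and \eqref{eq:3TTR-symmPnT}, and then transport it to a statement about the matrices $F^{(\nu)}_{k,n}$ via Theorem~\ref{thm:expansionFnumr} and linear independence of the scalar Gegenbauer polynomials $C^{(\nu+2\ell)}_{n-k}(x)$. For the first part I would simply subtract \eqref{eq:3TTR-symmPnT} from \eqref{eq:3TTR-symmPn}. On the left both sides carry the term $x\hat P^{(\nu)}_n(x)$, which cancels; on the right the surviving combination is
\[
\hat A^{(\nu)}_n \hat P^{(\nu)}_{n+1}(x) - \hat P^{(\nu)}_{n+1}(x)\hat A^{(\nu)}_n
+ \hat B^{(\nu)}_n \hat P^{(\nu)}_n(x) - \hat P^{(\nu)}_n(x)(\hat B^{(\nu)}_n)^t
+ \hat C^{(\nu)}_n \hat P^{(\nu)}_{n-1}(x) - \hat P^{(\nu)}_{n-1}(x)\hat C^{(\nu)}_n .
\]
Rewriting the first and last differences as commutators $[\hat P^{(\nu)}_{n+1}(x),\hat A^{(\nu)}_n]$ (up to an overall sign, which I would normalise to match the stated form) and $[\hat P^{(\nu)}_{n-1}(x),\hat C^{(\nu)}_n]$, and leaving the middle pair as $\hat P^{(\nu)}_n(x)(\hat B^{(\nu)}_n)^t - \hat B^{(\nu)}_n \hat P^{(\nu)}_n(x)$, gives exactly the claimed relation. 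The only subtlety is to double-check that \eqref{eq:3TTR-symmPnT} is legitimate: it follows by transposing \eqref{eq:3TTR-symmPn}, using that $\hat P^{(\nu)}_n(x)$ is symmetric by Corollary~\ref{cor:thm:expansionFnumr}, that $\hat A^{(\nu)}_n$ and $\hat C^{(\nu)}_n$ are symmetric (they are diagonal), and that $x$ is a scalar. This is already noted in the text just before the proposition, so I would cite it.

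For the second part I substitute the expansion $\hat P^{(\nu)}_n(x)=\sum_{k\ge0} F^{(\nu)}_{k,n}\,C^{(\nu+2\ell)}_{n-k}(x)$ from Theorem~\ref{thm:expansionFnumr} into the matrix identity just obtained. I would align the summation indices so that every term is a coefficient of $C^{(\nu+2\ell)}_{n-k}(x)$: the term $\hat P^{(\nu)}_{n+1}(x)$ contributes $F^{(\nu)}_{k+1,n+1}C^{(\nu+2\ell)}_{n-k}(x)$ (since $(n+1)-(k+1)=n-k$), the term $\hat P^{(\nu)}_n(x)$ contributes $F^{(\nu)}_{k,n}C^{(\nu+2\ell)}_{n-k}(x)$, and the term $\hat P^{(\nu)}_{n-1}(x)$ contributes $F^{(\nu)}_{k-1,n-1}C^{(\nu+2\ell)}_{n-k}(x)$ (since $(n-1)-(k-1)=n-k$). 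Since the polynomials $\{C^{(\nu+2\ell)}_{j}(x)\}_{j\ge0}$ are linearly independent (distinct degrees), each coefficient of $C^{(\nu+2\ell)}_{n-k}(x)$ must vanish separately, which yields
\[
[F^{(\nu)}_{k+1,n+1},\hat A^{(\nu)}_n] + F^{(\nu)}_{k,n}(\hat B^{(\nu)}_n)^t - \hat B^{(\nu)}_n F^{(\nu)}_{k,n} + [F^{(\nu)}_{k-1,n-1},\hat C^{(\nu)}_n] = 0
\]
for all relevant $k$. The boundary conventions $F^{(\nu)}_{k,n}=0$ for $k<0$ or $k>n\wedge2\ell$ and $\hat C^{(\nu)}_0=0$ simply reflect the ranges in Theorem~\ref{thm:expansionFnumr} and the initial condition $\hat P^{(\nu)}_{-1}=0$, so the identity holds with no spurious terms for small $n$ or extreme $k$.

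I do not anticipate a serious obstacle here; this is essentially bookkeeping once the symmetry (Corollary~\ref{cor:thm:expansionFnumr}) and the expansion (Theorem~\ref{thm:expansionFnumr}) are in hand. The one place to be careful is the index shift in the $C^{(\nu+2\ell)}_{n-k}$ coefficients when passing from the polynomial identity to the $F$-identity: one must verify that reindexing $\hat P^{(\nu)}_{n\pm1}$ indeed aligns the Gegenbauer degree with $n-k$ rather than $n\pm1-k$, and that the stated convention on vanishing of $F^{(\nu)}_{k,n}$ makes the finite sums legitimate to compare termwise. A secondary point worth a sentence is the sign normalisation of the two commutators — depending on how one subtracts \eqref{eq:3TTR-symmPnT} from \eqref{eq:3TTR-symmPn}, the commutators may appear with the opposite overall sign, so I would simply choose the order of subtraction that reproduces the form displayed in the proposition.
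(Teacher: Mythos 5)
Your argument is correct and coincides with the paper's own proof: the first identity is obtained by subtracting the transposed recurrence \eqref{eq:3TTR-symmPnT} from \eqref{eq:3TTR-symmPn} (using the symmetry of $\hat{P}^{(\nu)}_n$ and of the diagonal matrices $\hat{A}^{(\nu)}_n$, $\hat{C}^{(\nu)}_n$), and the second follows by inserting the expansion of Theorem~\ref{thm:expansionFnumr} and equating coefficients of $C^{(\nu+2\ell)}_{n-k}(x)$, exactly as in the paper. The only detail you leave implicit is that the top-degree coefficient $[F^{(\nu)}_{0,n+1},\hat{A}^{(\nu)}_n]$ vanishes because $F^{(\nu)}_{0,n+1}$ is diagonal, which is the paper's remark that the identity has degree $n$; this is a harmless omission.
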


Entry-wise the recursion for $F^{(\nu)}_{k,n}$ boils down to 
\begin{multline*}
\bigl((\hat{A}^{(\nu)}_n)_{j,j} - (\hat{A}^{(\nu)}_n)_{i,i}\bigr) (F^{(\nu)}_{k+1,n+1})_{i,j}
+ (\hat{B}^{(\nu)}_n)_{j,j-1} (F^{(\nu)}_{k,n})_{i,j-1} +
(\hat{B}^{(\nu)}_n)_{j,j+1} (F^{(\nu)}_{k,n})_{i,j+1}
\\
- (\hat{B}^{(\nu)}_n)_{i,i-1} (F^{(\nu)}_{k,n})_{i-1,j} 
- (\hat{B}^{(\nu)}_n)_{i,i+1} (F^{(\nu)}_{k,n})_{i+1,j} 
+ \bigl((\hat{C}^{(\nu)}_n)_{j,j} - (\hat{C}^{(\nu)}_n)_{i,i}\bigr) (F^{(\nu)}_{k-1,n-1})_{i,j} = 0
\end{multline*}
with the explicit matrix entries for $\hat{A}^{(\nu)}_n$,
$\hat{B}^{(\nu)}_n$ and $\hat{C}^{(\nu)}_n$ 
recorded in \eqref{eq:3TTR-Pn}, \eqref{eq:3TTR-symmPn}. 

\begin{proof}
The first part follows by subtracting \eqref{eq:3TTR-symmPn} from \eqref{eq:3TTR-symmPnT}.
Note that this is a polynomial identity of degree $n$, since the leading coefficient $D^{(\nu)}_{n+1}$ of $\hat{P}^{(\nu)}_{n+1}$ commutes with $\hat{A}^{(\nu)}_{n}$ as both are diagonal. 

The statement for the matrices $F^{(\nu)}_{k,n}$ then follows by plugging Theorem \ref{thm:expansionFnumr} in the identity from the first part.
This procedure leads to an expansion in terms of Gegenbauer polynomials $C^{(\nu+2\ell)}_m$; collecting the coefficients of a Gegenbauer polynomial of fixed degree gives the result. 
\end{proof}

Our next example arises from the second-order matrix differential operator of hypergeometric type for which the matrix-valued Gegenbauer polynomials are eigenfunctions.
At the same time, the matrix-valued Gegenbauer polynomials are eigenfunctions for a first-order matrix differential equation.
These two matrix differential operators are in the right Fourier algebra for the matrix-valued Gegenbaure polynomials. We recall the operators explicitly from \cite[Thm.~3.2]{KoeldlRR}. 

The second-order matrix hypergeometric differential operator for which the polynomials are eigenfunctions is given as follows, see
\cite[Thms.~2.3, 3.2]{KoeldlRR}, where we switched to the notation $\cD^{(\nu)}$ in order to avoid confusion with the diagonal matrix $D^{(\nu)}_n$ used in this paper:
\begin{gather}\label{eq:DOD-Pn}
P^{(\nu)}_n \cdot \cD^{(\nu)} = \Lambda_n(\cD^{(\nu)})\, P^{(\nu)}_n,
\quad
\Lambda_n(\cD^{(\nu)}) =  -n(2\ell+2\nu+n)\Id-V,
\\ \intertext{with}
\cD^{(\nu)} = \frac{d^2}{dx^2}\, (1-x^2)\Id + \frac{d}{dx} \, (C -x(2\ell+2\nu+1)\Id) - V,
\nonumber\\
C =\sum_{i=0}^{2\ell-1} (2\ell-i) E_{i,i+1} + \sum_{i=1}^{2\ell} i E_{i,i-1},
\quad 
V= -\sum_{i=0}^{2\ell} i(2\ell-i) E_{i,i}.
\nonumber
\end{gather}
It follows that
\begin{equation}\label{eq:DOD-symPn}
\hat{P}^{(\nu)}_n \cdot \cD^{(\nu)} = D_n^{(\nu)} 
\Lambda_n(\cD^{(\nu)})(D_n^{(\nu)})^{-1}\, \hat{P}^{(\nu)}_n
= \Lambda_n(\cD^{(\nu)})\, \hat{P}^{(\nu)}_n,
\end{equation}
since $D_n^{(\nu)}$ and $\Lambda_n(\cD^{(\nu)})$ commute, being diagonal matrices. 

\begin{prop}\label{prop:recursionfromDODsymmetricPn}
In the notation above, the symmetric matrix-valued Gegenbauer polynomials satisfy 
\begin{equation*} 
\frac{d\hat{P}^{(\nu)}_n}{dx}(x)\, C -C^t \, \frac{d\hat{P}^{(\nu)}_n}{dx}(x) = -2 [V, \hat{P}^{(\nu)}_n(x)].
\end{equation*}
In turn, the matrices $F^{(\nu)}_{k,n}$ defined in Theorem~\ref{thm:expansionFnumr} satisfy 
\[
 F^{(\nu)}_{k,n}C- C^t   F^{(\nu)}_{k,n}
 = \frac{1}{\nu+2\ell+n-k-1}\, [F^{(\nu)}_{k+1,n},V] 
 - \frac{1}{\nu+2\ell+n-k+1}\, [F^{(\nu)}_{k-1,n},V].  
\]
\end{prop}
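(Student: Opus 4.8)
The plan is to mimic the two-step pattern of the proof of Proposition~\ref{prop:recursionfrom3TRsymmetricPn}: first obtain the polynomial identity by transposing a known eigenvalue equation and using that $\hat{P}^{(\nu)}_n$ is symmetric, and then substitute the expansion of Theorem~\ref{thm:expansionFnumr} and equate coefficients of scalar Gegenbauer polynomials.

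For the first identity I would start from~\eqref{eq:DOD-symPn}, written out explicitly (recalling that $\cD^{(\nu)}$ acts from the right) as
\[
(1-x^2)\,\frac{d^2\hat{P}^{(\nu)}_n}{dx^2}(x) + \frac{d\hat{P}^{(\nu)}_n}{dx}(x)\bigl(C - x(2\ell+2\nu+1)\Id\bigr) - \hat{P}^{(\nu)}_n(x)V = \Lambda_n(\cD^{(\nu)})\,\hat{P}^{(\nu)}_n(x).
\]
Since $\hat{P}^{(\nu)}_n$ is symmetric by Corollary~\ref{cor:thm:expansionFnumr} (hence so are its $x$-derivatives), while $V$, $\Id$ and the scalar functions $1-x^2$ and $-x(2\ell+2\nu+1)$ transpose trivially and $C$ becomes $C^t$, transposing this equation and subtracting it from the original makes the second-order terms and the $x$-proportional first-order terms cancel. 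What survives is $\frac{d\hat{P}^{(\nu)}_n}{dx}C - C^t\frac{d\hat{P}^{(\nu)}_n}{dx}$ on the left together with $V\hat{P}^{(\nu)}_n-\hat{P}^{(\nu)}_n V=[V,\hat{P}^{(\nu)}_n]$, and $\Lambda_n(\cD^{(\nu)})\hat{P}^{(\nu)}_n-\hat{P}^{(\nu)}_n\Lambda_n(\cD^{(\nu)})=[\Lambda_n(\cD^{(\nu)}),\hat{P}^{(\nu)}_n]=-[V,\hat{P}^{(\nu)}_n]$ on the right, using $\Lambda_n(\cD^{(\nu)})=-n(2\ell+2\nu+n)\Id-V$ and that the scalar part of $\Lambda_n$ is central. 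Combining the two commutator contributions gives exactly $\frac{d\hat{P}^{(\nu)}_n}{dx}(x)\,C-C^t\frac{d\hat{P}^{(\nu)}_n}{dx}(x)=-2[V,\hat{P}^{(\nu)}_n(x)]$.

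For the second identity I would insert $\hat{P}^{(\nu)}_n(x)=\sum_{k=0}^{n\wedge2\ell}F^{(\nu)}_{k,n}C^{(\nu+2\ell)}_{n-k}(x)$ (Theorem~\ref{thm:expansionFnumr}) into the identity just proved, differentiating with $\frac{d}{dx}C^{(\la)}_m(x)=2\la\,C^{(\la+1)}_{m-1}(x)$ so that the left-hand side becomes $2(\nu+2\ell)\sum_k\bigl(F^{(\nu)}_{k,n}C-C^tF^{(\nu)}_{k,n}\bigr)C^{(\nu+2\ell+1)}_{n-1-k}(x)$; on the right-hand side $-2[V,\hat{P}^{(\nu)}_n]=-2\sum_k[V,F^{(\nu)}_{k,n}]C^{(\nu+2\ell)}_{n-k}(x)$, and I would rewrite each $C^{(\nu+2\ell)}_{n-k}$ in the $C^{(\nu+2\ell+1)}$-basis via the contiguous relation $C^{(\la)}_m(x)=\frac{\la}{\la+m}\bigl(C^{(\la+1)}_m(x)-C^{(\la+1)}_{m-2}(x)\bigr)$ (a standard Gegenbauer identity, readily checked from the generating function or from~\eqref{eq:connectionGegenbauerpol}). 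Dividing by $2(\nu+2\ell)$ and equating coefficients of $C^{(\nu+2\ell+1)}_{n-1-k}(x)$ — legitimate by linear independence — the $C^{(\la+1)}_m$-contribution supplies the $F^{(\nu)}_{k+1,n}$-term with denominator $\nu+2\ell+n-k-1$ and the $C^{(\la+1)}_{m-2}$-contribution the $F^{(\nu)}_{k-1,n}$-term with denominator $\nu+2\ell+n-k+1$, yielding precisely the stated recurrence; the conventions $F^{(\nu)}_{k,n}=0$ for $k<0$ or $k>n\wedge2\ell$ dispose of the endpoints, consistently with $F^{(\nu)}_{0,n}$ being diagonal so that $[V,F^{(\nu)}_{0,n}]=0$.

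Both transposition-and-cancellation bookkeepings are routine; the only genuine input is the contiguous relation needed to bring $\frac{d}{dx}\hat{P}^{(\nu)}_n$ and $\hat{P}^{(\nu)}_n$ into one common scalar basis, so I would flag locating that identity and pinning down its normalisation as the main — admittedly modest — obstacle.
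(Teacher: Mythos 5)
Your proposal is correct and follows essentially the same route as the paper: transpose the eigenvalue equation \eqref{eq:DOD-symPn}, subtract, and use that $V$ and $\Lambda_n(\cD^{(\nu)})$ are diagonal (with the $n$-dependent part a multiple of $\Id$) to get the first identity, then insert Theorem~\ref{thm:expansionFnumr}, differentiate the scalar Gegenbauer polynomials, and re-expand via the contiguous relation before comparing coefficients. The contiguous relation you flag as the only real input is exactly the $N=1$ case of \eqref{eq:connectionGegenbauerpol-integer}, i.e.\ \eqref{eq:scalarGegenbauerconnected}, which is what the paper uses as well.
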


Observe that the first identity of Proposition \ref{prop:recursionfromDODsymmetricPn} is a matrix-valued polynomial identity of degree $n-1$, since the leading coefficient of $\hat{P}^{(\nu)}_n$ is diagonal and commutes with $V$.
Furthermore, notice that the participating matrices $C$ and $V$ depend on neither degree~$n$ nor parameter~$\nu$.

Entry-wise the recursion for $F^{(\nu)}_{k,n}$ in Proposition \ref{prop:recursionfromDODsymmetricPn} reads
\begin{multline*}
\frac{V_{j,j} - V_{i,i}}{\nu+2\ell+n-k-1}\, (F^{(\nu)}_{k+1,n})_{i,j}
- 
\frac{V_{j,j} - V_{i,i}}{\nu+2\ell+n-k+1}\, (F^{(\nu)}_{k-1,n})_{i,j}
\\
= C_{j-1,j} (F^{(\nu)}_{k,n})_{i,j-1} + C_{j+1,j} (F^{(\nu)}_{k,n})_{i,j+1}
-  C_{i-1,i} (F^{(\nu)}_{k,n})_{i-1,j} - C_{i+1,i} (F^{(\nu)}_{k,n})_{i+1,j}
\end{multline*}
with the explicit matrix entries for $C$ and 
$V$ given above. Again, $C$ and $V$ are independent of $n$ and~$\nu$. 

\begin{proof} 
Equation \eqref{eq:DOD-symPn} translates into
\[
\frac{d^2\hat{P}^{(\nu)}_n}{dx^2}(x)\, (1-x^2)\Id + 
\frac{d\hat{P}^{(\nu)}_n}{dx}(x) \, (C -x(2\ell+2\nu+1)\Id) - \hat{P}^{(\nu)}_n(x) V =
\Lambda_n(\cD^{(\nu)})\, \hat{P}^{(\nu)}_n(x).
\]
We take the transpose of this identity using the fact that $V$ and  
$\Lambda(\cD^{(\nu)})$ are diagonal, hence symmetric, and then subtract one from the other,
keeping in mind that multiples of the identity $\Id$ commute with any matrix:
\[
\frac{d\hat{P}^{(\nu)}_n}{dx}(x)\, C -C^t \, \frac{d\hat{P}^{(\nu)}_n}{dx}(x) - [\hat{P}^{(\nu)}_n(x),V] = 
[\Lambda_n(\cD^{(\nu)}), \hat{P}^{(\nu)}_n(x)].
\]
The commutator on the right-hand side does not depend on~$n$ in the eigenvalue $\Lambda_n(\cD^{(\nu)})$, as all the dependence on~$n$ in $\Lambda(\cD^{(\nu)})$ is in the multiple $-n(2\ell+2\nu+n)\Id$ of the identity; it reduces to $-[V, \hat{P}^{(\nu)}_n(x)]$ and gives the first identity of the proposition. 

For the second identity we implement Theorem \ref{thm:expansionFnumr} in the identity just proven and use $\frac{d}{dx} C^{(\nu+2\ell)}_{n-k}(x) = 2(\nu+2\ell)\, C^{(\nu+2\ell+1)}_{n-k-1}(x)$
(see, e.g., \cite{AndrAR,Aske,Isma,KoekLS}) to obtain 
\begin{align*}
\sum_{k=0}^{n\wedge 2\ell} \bigl( F^{(\nu)}_{k,n}C- C^t   F^{(\nu)}_{k,n}\bigr)\,  2(\nu+2\ell)\, C^{(\nu+2\ell+1)}_{n-k-1}(x)
= -2\sum_{k=0}^{n\wedge 2\ell}  [V, F^{(\nu)}_{k,n}]  \, C^{(\nu+2\ell)}_{n-k}(x).
\end{align*}
The case $N=1$ of \eqref{eq:connectionGegenbauerpol-integer} leads to
\begin{equation}\label{eq:scalarGegenbauerconnected}
\frac{\nu+2\ell+n-k}{\nu+2\ell}\,C^{(\nu+2\ell)}_{n-k}(x)
= C^{(\nu+2\ell+1)}_{n-k}(x) - C^{(\nu+2\ell+1)}_{n-k-2}(x)
\end{equation}
and to an expansion in Gegenbauer polynomials $C^{(\nu+2\ell+1)}_{n-k}$ on the right-hand side.
It remains to compare the coefficients in Gegenbauer polynomials $C^{(\nu+2\ell+1)}_{n-k}$ on both sides to deduce the second identity in the proposition. 
\end{proof}

We perform the same procedure for the first-order matrix differential operator $\cE^{(\nu)}$ contained in the right Fourier algebra. The operator is given by 
\begin{equation}\label{eq:DOE-Pn}
P^{(\nu)}_n \cdot \cE^{(\nu)} = \Lambda_n(\cE^{(\nu)})\, P^{(\nu)}_n,
\quad
\Lambda_n(\cE^{(\nu)})  = A_0^{(\nu)} + n B_1,
\end{equation}
where
\begin{gather*}
\cE^{(\nu)} =  \frac{d}{dx} \, (xB_1+B_0) + A_0^{(\nu)},
\\
2\ell\, B_0 =\sum_{i=0}^{2\ell-1} (2\ell-i) \, E_{i,i+1} - \sum_{i=1}^{2\ell} i\,E_{i,i-1},
\quad 
\ell \, B_1=-\sum_{i=0}^{2\ell} (\ell-i) \, E_{i,i},
\\ 
\ell \, A^{(\nu)}_0=\sum_{i=0}^{2\ell} \big((\ell+1)(i-2\ell)  - (\nu-1)(\ell-i)\big) \, E_{i,i}.
\end{gather*}
Then the symmetric Gegenbauer polynomials satisfy 
\begin{equation}\label{eq:DOE-symPn}
\hat{P}^{(\nu)}_n \cdot \cE^{(\nu)}
= D_n^{(\nu)} \Lambda_n(\cE^{(\nu)})(D_n^{(\nu)})^{-1}\, \hat{P}^{(\nu)}_n
=\Lambda_n(\cE^{(\nu)})\, \hat{P}^{(\nu)}_n,
\end{equation}
since $D_n^{(\nu)}$ and $\Lambda_n(\cE^{(\nu)})$ commute, being diagonal matrices. 

\begin{prop}\label{prop:recursionfromDOEsymmetricPn}
The symmetric matrix-valued Gegenbauer polynomials satisfy 
\begin{equation*} 
x \bigg[\frac{d\hat{P}^{(\nu)}_n}{dx}(x), B_1\bigg] 
+ \frac{d\hat{P}^{(\nu)}_n}{dx}(x)B_0-B_0^t \frac{d\hat{P}^{(\nu)}_n}{dx}(x)
= [2 A_0^{(\nu)} + n B_1, \hat{P}^{(\nu)}_n(x)].
\end{equation*}
Furthermore, the matrices $F^{(\nu)}_{k,n}$ defined in Theorem~\ref{thm:expansionFnumr} satisfy 
\begin{multline*}
\bigg[\frac{F^{(\nu)}_{k-2,n}}{\nu+2\ell+n-k+2} ,(2-k+2\nu+4\ell)B_1-2A^{(\nu)}_0\bigg] 
\\
+\bigg[\frac{F^{(\nu)}_{k,n}}{\nu+2\ell+n-k} ,(2n-k)B_1+2A^{(\nu)}_0\bigg] 
=  2\bigl( B_0^t  F^{(\nu)}_{k-1,n} - F^{(\nu)}_{k-1,n}B_0).
\end{multline*}
\end{prop}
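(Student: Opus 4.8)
The plan is to follow exactly the template established in the proofs of Propositions~\ref{prop:recursionfrom3TRsymmetricPn} and~\ref{prop:recursionfromDODsymmetricPn}: produce a symmetry identity for the polynomials $\hat{P}^{(\nu)}_n$ by transposing an eigenvalue equation, then translate it into a recursion for the matrices $F^{(\nu)}_{k,n}$ via Theorem~\ref{thm:expansionFnumr}. First I would write out \eqref{eq:DOE-symPn} in expanded form,
\[
x\,\frac{d\hat{P}^{(\nu)}_n}{dx}(x)\,B_1 + \frac{d\hat{P}^{(\nu)}_n}{dx}(x)\,B_0 + \hat{P}^{(\nu)}_n(x)\,A^{(\nu)}_0 = \Lambda_n(\cE^{(\nu)})\,\hat{P}^{(\nu)}_n(x),
\]
take the transpose using that $\hat{P}^{(\nu)}_n$ is symmetric (Corollary~\ref{cor:thm:expansionFnumr}), that $B_1$, $A^{(\nu)}_0$ and hence $\Lambda_n(\cE^{(\nu)})=A^{(\nu)}_0+nB_1$ are diagonal, and that $\tfrac{d}{dx}\hat{P}^{(\nu)}_n(x)$ is symmetric by \eqref{eq:derivativesymmetricPn} and Corollary~\ref{cor:thm:expansionFnumr} applied to $\hat{P}^{(\nu+1)}_{n-1}$. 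Subtracting the transposed equation from the original cancels the $x B_1$-terms up to the commutator $x[\tfrac{d}{dx}\hat{P}^{(\nu)}_n,B_1]$, turns the $B_0$-terms into $\tfrac{d}{dx}\hat{P}^{(\nu)}_n\,B_0 - B_0^t\,\tfrac{d}{dx}\hat{P}^{(\nu)}_n$, and collapses the right-hand side to the commutator $[2A^{(\nu)}_0 + nB_1, \hat{P}^{(\nu)}_n(x)]$ (the $n$-dependent diagonal part of $\Lambda_n(\cE^{(\nu)})$ survives in the commutator, unlike in Proposition~\ref{prop:recursionfromDODsymmetricPn} where $\Lambda_n(\cD^{(\nu)})$ has $n$ only in a multiple of $\Id$). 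This gives the first identity of the proposition, a matrix-valued polynomial identity of degree~$n$.

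Next I would substitute the expansion $\hat{P}^{(\nu)}_n(x)=\sum_{k} F^{(\nu)}_{k,n}\,C^{(\nu+2\ell)}_{n-k}(x)$ from Theorem~\ref{thm:expansionFnumr} into the first identity. Using $\tfrac{d}{dx}C^{(\nu+2\ell)}_{n-k}(x)=2(\nu+2\ell)\,C^{(\nu+2\ell+1)}_{n-k-1}(x)$ converts the derivative terms into sums of $C^{(\nu+2\ell+1)}_{n-k-1}$, while the undifferentiated term on the right stays in terms of $C^{(\nu+2\ell)}_{n-k}$; I would harmonise the families with \eqref{eq:scalarGegenbauerconnected}, i.e. $\tfrac{\nu+2\ell+n-k}{\nu+2\ell}C^{(\nu+2\ell)}_{n-k}=C^{(\nu+2\ell+1)}_{n-k}-C^{(\nu+2\ell+1)}_{n-k-2}$, exactly as in the proof of Proposition~\ref{prop:recursionfromDODsymmetricPn}. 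The only genuinely new feature here is the term $x\,[\tfrac{d}{dx}\hat{P}^{(\nu)}_n(x),B_1]$: after differentiating it becomes $2(\nu+2\ell)\sum_k x\,[F^{(\nu)}_{k,n},B_1]\,C^{(\nu+2\ell+1)}_{n-k-1}(x)$, and I would eliminate the factor $x$ by the three-term recurrence \eqref{eq:Gegenbauer-3TRR} for $C^{(\nu+2\ell+1)}$, writing $x\,C^{(\nu+2\ell+1)}_{n-k-1}$ as a combination of $C^{(\nu+2\ell+1)}_{n-k}$ and $C^{(\nu+2\ell+1)}_{n-k-2}$. Collecting the coefficient of each fixed $C^{(\nu+2\ell+1)}_{n-k}$ and simplifying the resulting rational prefactors (which will reshuffle the index shifts into $k-2$, $k-1$, $k$ as displayed, with denominators $\nu+2\ell+n-k+2$, $\nu+2\ell+n-k$) yields the stated recursion for $F^{(\nu)}_{k,n}$.

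The main obstacle I anticipate is purely bookkeeping rather than conceptual: the presence of the $x B_1$ derivative term means that, after substitution, three different index-shifting mechanisms act simultaneously — the derivative shift $n-k\mapsto n-k-1$, the recurrence shift from the stray factor $x$, and the connection-formula shift from \eqref{eq:scalarGegenbauerconnected} — so one must be careful that every contribution is expressed in the single family $C^{(\nu+2\ell+1)}_{n-k}$ before comparing coefficients, and that the rational factors $\tfrac{\nu+2\ell+n-k}{\nu+2\ell}$, $\tfrac{n-k}{2(\nu+2\ell+n-k-1)}$, $\tfrac{2(\nu+2\ell)+n-k}{2(\nu+2\ell+n-k+1)}$ coming from \eqref{eq:Gegenbauer-3TRR} combine correctly with the $B_1$-commutator coefficients $(2n-k)$ and $(2-k+2\nu+4\ell)$. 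As in the companion propositions, boundary terms ($k<0$ or $k>n\wedge2\ell$) are handled by the convention $F^{(\nu)}_{k,n}=0$, and no separate verification of low-degree cases is needed since the polynomial identity holds identically once coefficients of all $C^{(\nu+2\ell+1)}_{n-k}$ match. I expect the computation to be a somewhat longer but entirely routine variant of the proof of Proposition~\ref{prop:recursionfromDODsymmetricPn}.
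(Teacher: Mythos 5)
Your proposal is correct and follows essentially the same route as the paper: transpose the expanded form of \eqref{eq:DOE-symPn} using the symmetry of $\hat{P}^{(\nu)}_n$ and the diagonality of $B_1$, $A_0^{(\nu)}$, $\Lambda_n(\cE^{(\nu)})$, subtract to obtain the first identity, then substitute the expansion of Theorem~\ref{thm:expansionFnumr}, apply the derivative formula, the connection formula \eqref{eq:scalarGegenbauerconnected} and the three-term recurrence \eqref{eq:Gegenbauer-3TRR}, and compare coefficients of $C^{(\nu+2\ell+1)}_p$. (Only a cosmetic remark: the first identity has degree $n-1$, not $n$, since the diagonal leading coefficient of $\hat{P}^{(\nu)}_n$ commutes with $B_1$ and $A_0^{(\nu)}$.)
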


Note that the first identity of Proposition \ref{prop:recursionfromDOEsymmetricPn} is a matrix-valued polynomial identity of degree $n-1$, since the leading coefficient of $\hat{P}^{(\nu)}_n$ is diagonal and commutes with $B_1$ and 
$A_0^{(\nu)}$. We refrain from writing the second identity in terms of matrix coefficients. 

\begin{proof}
Note that \eqref{eq:DOE-symPn} means
\[ 
\frac{d\hat{P}^{(\nu)}_n}{dx}(x) \, (xB_1+B_0) + \hat{P}^{(\nu)}_n(x) A_0^{(\nu)}
= \Lambda_n(\cE^{(\nu)})\, \hat{P}^{(\nu)}_n(x);
\]
we take the transpose of this identity taking into account that $B_1$, $A^{(\nu)}_0$ and  $\Lambda(\cE^{(\nu)})$ are diagonal, hence symmetric.
Subtracting one from the other gives
\begin{align*}
x \bigg[\frac{d\hat{P}^{(\nu)}_n}{dx}(x), B_1\bigg] 
+ \frac{d\hat{P}^{(\nu)}_n}{dx}(x)B_0-B_0^t \frac{d\hat{P}^{(\nu)}_n}{dx}(x)
+[\hat{P}^{(\nu)}_n(x), A_0^{(\nu)}]
= [\Lambda_n(\cE^{(\nu)}), \hat{P}^{(\nu)}_n(x)].
\end{align*}
It remains to collect the two commutators to obtain the first identity.
Using it together with Theorem~\ref{thm:expansionFnumr}, plugging in the derivative of 
the scalar Gegenbauer polynomial and applying the connection formula~\eqref{eq:scalarGegenbauerconnected} leads to
\begin{align*}
&
\sum_{k=0}^{n\wedge 2\ell} [F^{(\nu)}_{k,n},B_1]
\,  2\, x\, C^{(\nu+2\ell+1)}_{n-k-1}(x) 
+ \sum_{k=0}^{n\wedge 2\ell} \bigl( F^{(\nu)}_{k,n}B_0- B_0^t   F^{(\nu)}_{k,n}\bigr)\,  2\, C^{(\nu+2\ell+1)}_{n-k-1}(x)
\\ &\quad
=\sum_{k=0}^{n\wedge 2\ell}  \frac{[2 A_0^{(\nu)} + n B_1, F^{(\nu)}_{k,n}]}{\nu+2\ell+n-k}\bigl( C^{(\nu+2\ell+1)}_{n-k}(x) - C^{(\nu+2\ell+1)}_{n-k-2}(x)\bigr).
\end{align*}
Finally, we use the three-term recursion \eqref{eq:Gegenbauer-3TRR} for the scalar Gegenbauer polynomials to write both sides as expansions in terms of $C^{(\nu+2\ell+1)}_{p}(x)$; comparing the coefficients of these expansions gives us the second identity of the proposition.
\end{proof}


\section{Generating function for matrix-valued Gegenbauer polynomials}
\label{sec:generatingfMVGegenbauerpols}

Several generating functions are known for the scalar Gegenbauer polynomials; they depend on a related normalization factor. The `pure' generating function is simply
\begin{equation}
\label{eq:gen-fun-scalar}
\sum_{n=0}^\infty C_n^{(\lambda)}(x)t^n=\frac1{(1-2xt+t^2)^\lambda}.
\end{equation}
In contrast, generating functions are not known for matrix-valued orthogonal polynomials, in particular, for the matrix-valued Gegenbauer polynomials that we discuss. In this short section we explain how the explicit formulas in Theorem~\ref{thm:expansionFnumr} allow one to write down the generating function for suitably normalized matrix-valued Gegenbauer polynomials when $\ell$ is fixed.
For this purpose consider the polynomials
\[
\wt P_n^{(\nu)}(x)
=\sum_{k=0}^{2\ell} \wt F_{k,n}^{(\nu)}\cdot C_{n-k}^{(\nu+2\ell)}(x),
\]
where
\[
(\wt F_{k,n}^{(\nu)})_{i,j} = \Gamma(\nu+n+2)(\nu+n+1) \gamma(\nu+n;i,j,k).
\]
This normalization of the Gegenbauer polynomials does not affect their symmetry and is chosen in such a way that the entries of new matrices $\wt F_{k,n}^{(\nu)}$ are \emph{polynomials} in $\nu+n$, in fact of degree $\lfloor\ell\rfloor$; indeed, the latter integer counts the maximal number of scalar Gegenbauer polynomials that show up in a linear combination of every entry of the matrix $\wt P_n^{(\nu)}(x)$.
Notice that from \eqref{eq:gen-fun-scalar} we have
\begin{align*}
\sum_{n=0}^\infty(\lambda+n)^jC_n^{(\lambda)}(x)t^n
&= t^{-\lambda}\sum_{n=0}^\infty(\lambda+n)^jC_n^{(\lambda)}(x)t^{\lambda+n}
\\
&= t^{-\lambda}\bigg(t\frac{\d}{\d t}\bigg)^j\sum_{n=0}^\infty C_n^{(\lambda)}(x)t^{\lambda+n}
\\
&= t^{-\lambda}\bigg(t\frac{\d}{\d t}\bigg)^j\bigg(\frac t{1-2xt+t^2}\bigg)^\lambda;
\end{align*}
for example,
\[
\sum_{n=0}^\infty(\lambda+n)C_n^{(\lambda)}(x)t^n=\frac{\lambda(1-t^2)}{(1-2xt+t^2)^{\lambda+1}}.
\]
Using these formulas for $j=0,1,\dots,\lfloor\ell\rfloor$ we can write explicitly the generating series
\[
M(x;t)=\sum_{n=0}^\infty\wt P_n^{(\nu)}(x)t^n
\]
for every particular choice of~$\ell$.
We will limit ourselves to the illustration for $\ell=1$, when the Gegenbauer polynomials are $3\times3$ matrices.
In this case we obtain
\begin{align*}
\wt F_{0,n}^{(\nu)}
&= \left.\begin{pmatrix}
\lambda-1 & 0 & 0 
\\
0 & 2\lambda-4  & 0 
\\
0 & 0 & \lambda-1
\end{pmatrix}\right|_{\lambda=(\nu+2)+n},
\displaybreak[2]\\
\wt F_{1,n}^{(\nu)}
&= \left.\begin{pmatrix}
 0 & -2\lambda & 0 
\\
-2\lambda & 0 & -2\lambda 
\\
0 & -2\lambda & 0 
\end{pmatrix}\right|_{\lambda=(\nu+2)+(n-1)},
\displaybreak[2]\\
\wt F_{2,n}^{(\nu)}
&= \left.\begin{pmatrix}
0 & 0 & \lambda+1 
\\
0 & 2\lambda+4 & 0 
\\
\lambda+1 & 0 & 0 
\end{pmatrix}\right|_{\lambda=(\nu+2)+(n-2)}.
\end{align*}
Therefore, the generating function of the $3\times 3$ Gegenbauer polynomials reads
\begin{align*}
M(x;t)
&=\begin{pmatrix} 1 & 0 & 0 \\ 0 & 2 & 0 \\ 0 & 0 & 1 \end{pmatrix} \sum_{n=0}^\infty((\nu+2)+n)C_n^{(\nu+2)}(x)t^n
-\begin{pmatrix} 1 & 0 & 0 \\ 0 & 4 & 0 \\ 0 & 0 & 1 \end{pmatrix} \sum_{n=0}^\infty C_n^{(\nu+2)}(x)t^n
\\ &\quad
-2t\begin{pmatrix} 0 & 1 & 0 \\ 1 & 0 & 1 \\ 0 & 1 & 0 \end{pmatrix} \sum_{n=1}^\infty((\nu+2)+(n-1))C_{n-1}^{(\nu+2)}(x)t^{n-1}
\\ &\quad
+t^2\begin{pmatrix} 0 & 0 & 1 \\ 0 & 2 & 0 \\ 1 & 0 & 0 \end{pmatrix} \sum_{n=2}^\infty((\nu+2)+(n-2))C_{n-2}^{(\nu+2)}(x)t^{n-2}
\\ &\quad
+t^2\begin{pmatrix} 0 & 0 & 1 \\ 0 & 4 & 0 \\ 1 & 0 & 0 \end{pmatrix} \sum_{n=2}^\infty C_{n-2}^{(\nu+2)}(x)t^{n-2}
\displaybreak[2]\\
&=\begin{pmatrix} 1 & 0 & 0 \\ 0 & 2 & 0 \\ 0 & 0 & 1 \end{pmatrix} \frac{(\nu+2)(1-t^2)}{(1-2xt+t^2)^{\nu+3}}
-\begin{pmatrix} 1 & 0 & 0 \\ 0 & 4 & 0 \\ 0 & 0 & 1 \end{pmatrix} \frac1{(1-2xt+t^2)^{\nu+2}}
\\ &\quad
-\begin{pmatrix} 0 & 1 & 0 \\ 1 & 0 & 1 \\ 0 & 1 & 0 \end{pmatrix}
\frac{2(\nu+2)t(1-t^2)}{(1-2xt+t^2)^{\nu+3}}
\\ &\quad
+\begin{pmatrix} 0 & 0 & 1 \\ 0 & 2 & 0 \\ 1 & 0 & 0 \end{pmatrix} \frac{(\nu+2)t^2(1-t^2)}{(1-2xt+t^2)^{\nu+3}}
+\begin{pmatrix} 0 & 0 & 1 \\ 0 & 4 & 0 \\ 1 & 0 & 0 \end{pmatrix} \frac{t^2}{(1-2xt+t^2)^{\nu+2}},
\end{align*}
so that $M(x;t)\cdot(1-2xt+t^2)^{\nu+3}$ is a $3\times3$ matrix with entries from $\mathbb Z[\nu,x,t]$.

The computation shows that, for general $\ell>0$, the generating function $M(x;t)$ is a $(2\ell+\nobreak1)\times(2\ell+1)$ matrix multiple of the generating function \eqref{eq:gen-fun-scalar} with $\lambda=\nu+2\ell+\lfloor\ell\rfloor$, whose all entries are polynomials in $\nu$, $x$ and~$t$ with integer coefficients.


\section{Zeros of matrix-valued Gegenbauer polynomials}\label{sec:zerosentriesMVGegenbauerpols}

It is common to understand zeros of matrix-valued polynomials as those of their \emph{determinants}, see \cite{DamaPS} for further discussion and references. 
Such zeros serve as a natural analog of those for scalar polynomials: in the case of matrix-valued orthogonal polynomials for the measure supported on a real interval, their determinants have all zeros on the (internal part of the) interval (of multiplicities that do not exceed the size of the matrix); see \cite[Thm.~1.1]{DL-R96} and \cite[Cor.~4.4]{DL-RS99}.
For our matrix-valued Gegenbauer polynomials, the support of the measure is the interval $[-1,1]$, hence all the zeros of their determinants lie in the interior of this interval (and are symmetric with respect to the origin).
Theorem~\ref{thm:expansionFnumr} provides us with access to \emph{individual entries} of these matrix-valued polynomials, and we report on our findings in this section.
At the moment, these considerations do not seem to provide any particular insight on a connection between entry-wise zeros and the zeros of the determinant of the matrix-valued polynomial.
They may be part of a different analytic phenomenon that shows up in the matrix-valued setting. On the other hand, there is a clear interest in zeros of suitable 
linear combinations of scalar orthogonal polynomials, see e.g. Beardon and Driver \cite{BearD}, Dur\'an \cite{Dura25} 
and references given there. The linear combinations of Gegenbauer 
polynomials arising as entries of the matrix-valued Gegenbauer polynomials for which we study the structure of the zeros gives intriguing examples of such linear combinations, which are typically outside the classes studied in \cite{BearD}, \cite{Dura25}. 

For the clarity of exposition in this section, we introduce the notion of \emph{echelon} for entries of an $(2\ell+1)\times(2\ell+1)$-matrix $(a_{ij})_{0\le i,j\le2\ell}$. This corresponds to a `distance of the entry $a_{ij}$ to the boundary of the matrix', given explicitly by $\operatorname{ech}_\ell(i,j)=1+\min\{i,2\ell-\nobreak i,\allowbreak j,2\ell-j\}$. For example, entries located in the first or last rows, or in the first or last columns are referred to as being from the `first echelon': if $i\in\{0,2\ell\}$ or $j\in\{0,2\ell\}$ then $\operatorname{ech}_\ell(i,j)=1$.

The binomial factor
\[
\binom{2\ell}k\binom k{\frac12(k+i-j)}\binom{2\ell-k}{\frac12(i+j-k)}
\]
in the definition of $\ga_\ell(\nu;i,j,k)$ dictates the presence of at most $\operatorname{ech}_\ell(i,j)$ polynomials $C_m^{(\nu+2\ell)}(x)$ in the linear combination expressing the entry $\big(\wh P_n^{(\nu)}(x)\big)_{i,j}$.
In particular, the entries from the `first echelon' (when $\operatorname{ech}_\ell(i,j)=1$) are multiples of corresponding scalar Gegenbauer polynomials, so that their zeros are real and lie on the interval $(-1,1)$. They even inherit the zero interlacing property when we consider the same first-echelon entry of two consecutive matrix-valued Gegenbauer polynomials.

The zeros of the entries from the `second echelon' (when $\operatorname{ech}_\ell(i,j)=2$) are also real and belong to the measure support interval $[-1,1]$.
To see this, notice that these entries are always linear combinations of $C_m^{(\lambda)}(x)$ and $C_{m-2}^{(\lambda)}(x)$, where $\lambda=\nu+2\ell$, with coefficients \emph{of the same sign}.
On the other hand, the two scalar Gegenbauer polynomials in such combinations can be translated into \emph{consecutive} Jacobi polynomials%
\footnote{They are usually called $P_n^{(\alpha,\beta)}(x)$ but we try to avoid a conflicting notation with our matrix-valued Gegenbauer polynomials.}
$J_n^{(\alpha,\beta)}(x)$ with the help of expressions
\[
C_{2n}^{(\lambda)}(x)=\frac{(\lambda)_n}{(\frac12)_n}\,J_n^{(\lambda-\frac12,-\frac12)}(2x^2-1)
\quad\text{and}\quad
C_{2n+1}^{(\lambda)}(x)=\frac{(\lambda)_{n+1}}{(\frac12)_{n+1}}\,xJ_n^{(\lambda-\frac12,\frac12)}(2x^2-1).
\]
Finally, the zeros of two consecutive Jacobi polynomials (labelled by the same pair of parameters $(\alpha,\beta)$) lie on the interval $(-1,1)$ and interlace, so that the zeros of their linear combination with non-negative coefficients lie on the same interval by the Hermite--Kakeya--Obreschkoff theorem (see, for example, \cite[Thm.~6.3.8]{RahmS} or \cite[Prop.~2.10]{MFetal-24} for a more general statement when multiple zeros and non-strict interlacing are allowed).
This justifies the location of zeros of entries $\big(\wh P_n^{(\nu)}(x)\big)_{i,j}$ with $\operatorname{ech}_\ell(i,j)=2$.

\begin{remark}\label{rem:shift-k-real}
For the middle $(1,1)$-entry in the $3\times3$ case ($\ell=1$), Theorem~\ref{thm:expansionFnumr} gives the following explicit expression:
\[
\big(\wh P_n^{(\nu)}(x)\big)_{1,1}
=\frac{(\nu+n+2)\,n!\,\Gamma(\nu+2)}{2^{n-1}(\nu+n+1)^2\Gamma(\nu+n)}
\cdot\bigg(\frac{C_n^{(\nu+2)}(x)}{\nu+n+2}+\frac{C_{n-2}^{(\nu+2)}(x)}{\nu+n}\bigg),
\]
so that the polynomial is proportional to the sum $\wt C_n^{(\nu+2)}(x)+\wt C_{n-2}^{(\nu+2)}(x)$, where
$\wt C_n^{(\lambda)}(x)=C_n^{(\lambda)}(x)/(\lambda+n)$.
Though the argument from the last paragraph explains why the zeros of $\wt C_n^{(\lambda)}(x)+\wt C_{n-k}^{(\lambda)}(x)$ lie on the interval $-1<x<1$ for $k=1$ and~$2$, experimentally we have observed that this is also the case for other choices of shift $k\in\mathbb Z$.
\end{remark}

The first case when the `third echelon' shows up is the middle $(2,2)$-entry of the $5\times5$ Gegenbauer polynomials ($\ell=2$); we get
\begin{align*}
&
\big(\wh P_n^{(\nu)}(x)\big)_{2,2}
=\frac{3(\nu+n+4)\,n!\,\Gamma(\nu+4)}{2^{n-1}\Gamma(\nu+n)}
\cdot\bigg(\frac{C_n^{(\nu+4)}(x)}{(\nu+n+2)^2(\nu+n+3)^2(\nu+n+4)}
\\ &\quad
+\frac{4C_{n-2}^{(\nu+4)}(x)}{(\nu+n+1)^2(\nu+n+2)(\nu+n+3)^2}
+\frac{C_{n-4}^{(\nu+4)}(x)}{(\nu+n)(\nu+n+1)^2(\nu+n+2)^2}\bigg).
\end{align*}
The structure of zeros of these polynomials is somewhat peculiar: There is one zero $x=0$ for $n=1$; then a pair of purely imaginary (conjugate) zeros for $n=2$; then another pair and $x=0$ for $n=3$. Beyond this $n$ we start witnessing real zeros (always on the interval $-1<x<1$) and a pair of imaginary conjugates. The real zeros interlace when passing from $n-1$ to $n$, while the upper imaginary zero increase (up to a certain constant strictly less than~1) with~$n$.

\begin{figure}[t]
    \centering
    \begin{subfigure}[t]{0.32\textwidth}
        \centering
        \includegraphics[width=\textwidth]{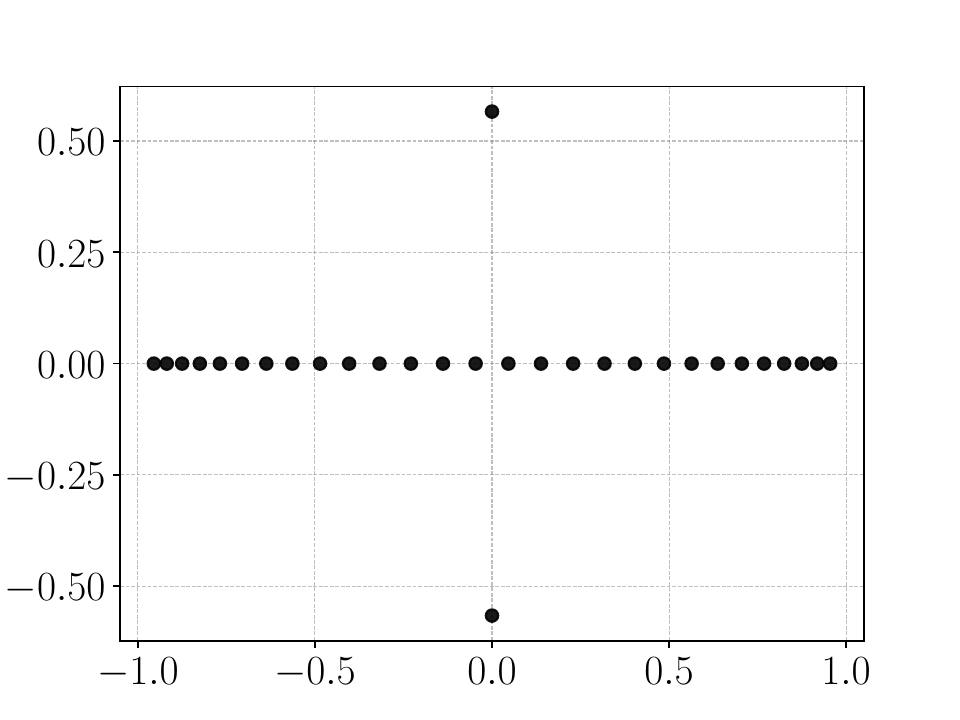}
    \end{subfigure}
    \hfill
    \begin{subfigure}[t]{0.32\textwidth}
        \centering
        \includegraphics[width=\textwidth]{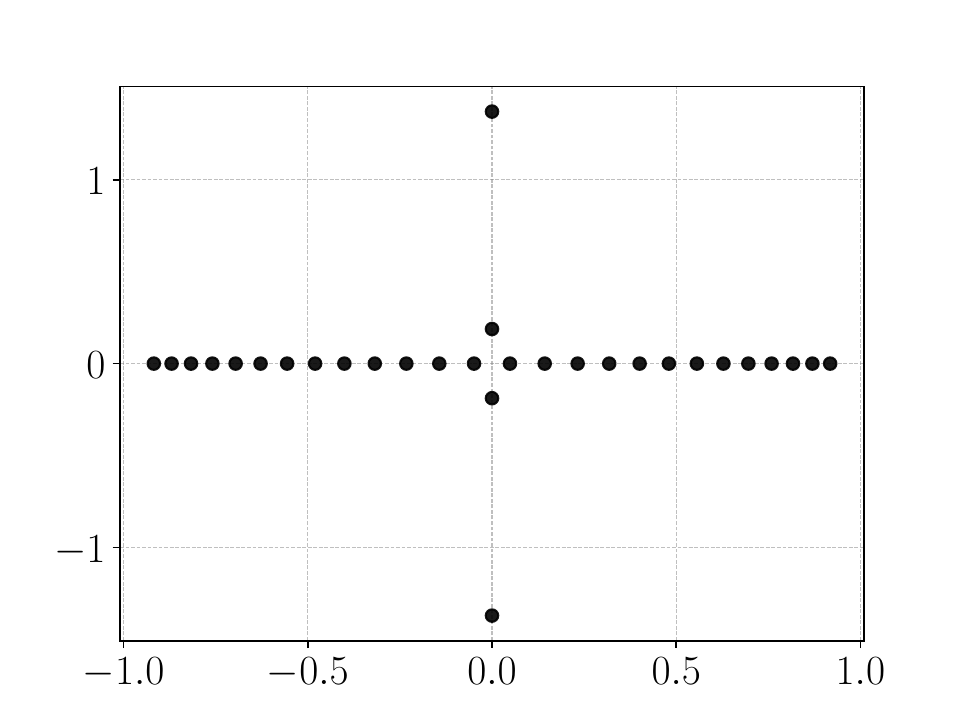}
    \end{subfigure}
    \begin{subfigure}[t]{0.32\textwidth}
        \centering
        \includegraphics[width=\textwidth]{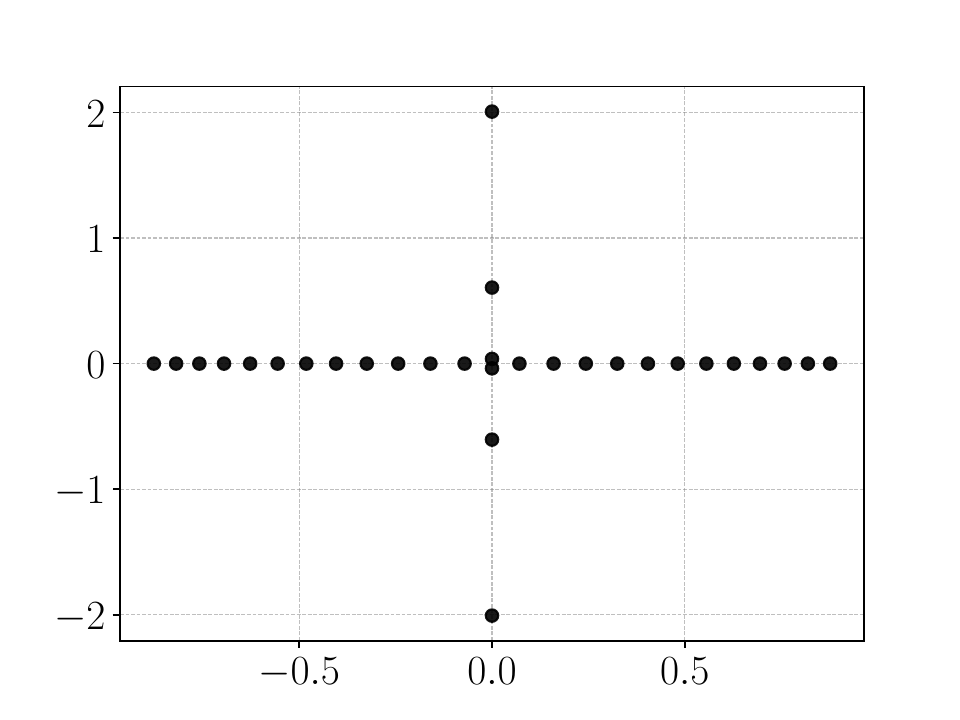}
    \end{subfigure}
    \caption{%
        Zeros of the middle entry $\big(\hat P^{(3)}_{30}(x)\big)_{\ell,\ell}$ for $\ell=2$ (left), $\ell=4$ (center) and $\ell=6$ (right)
    }
    \label{fig:EntryMiddle}
\end{figure}

Similar structures are present for `higher echelons' when more pairs of purely imaginary zeros come up, while all other zeros remain real\,---\,see Figure~\ref{fig:EntryMiddle} for selected instances.
So far we have not figured out reasons of this behavior.
Through a complicated route, our discussions of this phenomenon led us to consider general linear combinations of scalar orthogonal polynomials and finally resulted in a separate project\,---\,the details of this side investigation can be found in~\cite{KRZ24}.

We have not explored carefully the theme of zero loci of individual entries for other known families of \emph{symmetrizable} orthogonal polynomials. But the peculiar structure of zeros seems to persist, for instance, in `randomly chosen' examples of matrix-valued Hermite
polynomials from \cite{IKR19}; the fact that they can be normalized as symmetric is numerically supported in all these examples.
At the same time, we do not expect the zero location to be a `universal' phenomenon.
As has been pointed out to us by Arno Kuijlaars, the off-diagonal entries of matrix-valued orthogonal polynomials can have a very exotic distribution already in $2\times2$ situations; examples arising from a beautiful combinatorics of periodic hexagon tilings with period~2 are discussed in detail in \cite{GK21}\,---\,the off-diagonal zeros follow a quite unexpected pattern.


\section{Matrix matters and discussion}\label{sec:discussion}

The explicit connection formulas for the matrix-valued and scalar Gegenbauer polynomials in Theorems~\ref{thm:expansionGnumr} and~\ref{thm:expansionFnumr} raise a question about how general this phenomenon is for matrix-valued orthogonal polynomials.
They also suggest an approach to look for new families of matrix-valued polynomials by imposing the length in such expansions to depend only on the matrix size but not on the degree.
Decompositions of this type, in particular Theorem~\ref{thm:expansionFnumr}, hint at the possibility to investigate asymptotics of matrix-valued orthogonal polynomials using known ones for their scalar bases.
These formulas also offer numerous further directions for study and applications of matrix-valued polynomials.

The expressions in Theorem~\ref{thm:expansionFnumr} allow one to pursue analysis of a related Pad\'e problem for the matrix-valued generating function of the moments of the weight~\eqref{eq:defweight}.
The corresponding theoretical setup goes in complete parallel with the scalar version \cite{Duran96}.
This gives room to potential applications of these matrix-valued orthogonal polynomials in number theory, to arithmetic properties of the values of the generating function at rationals\,---\,the values that can be viewed as both matrices and entry-wise.
This arithmetic direction seems to be under-explored at the moment.

It would be also of great interest to understand the new differential-difference structure for the matrix-valued Gegenbauer polynomials given in Section~\ref{sec:differenceeqtFknnu} more conceptually.
We stress on the fact that it completely degenerates in the scalar case (for $1\times1$ matrices), so that it does not represent any classically familiar setting.

We are confident that the mathematics story of this note will continue in diverse\,---\,and quite remarkable!\,---\,directions.


\end{document}